\theoremstyle{plain}
\newtheorem{dl}{Theorem}[section]
\newtheorem{bd}{Lemma}[section]
\newtheorem{dn}{Definition}[section]
\newcommand{\ue}{u_\epsilon}
\newcommand{\ve}{v_\epsilon}
\newcommand{\uei}{u_{\epsilon_i}}
\newcommand{\vei}{v_{\epsilon_i}}
\title{Fast reaction limits and convergence rate for nonlinear bulk-surface reaction-diffusion systems modeling reversible chemical reactions}
\author[1]{The Tuan Hoang}
\author[2]{Nhu Phong Tham}
\author[3]{Bao Quoc Tang\footnote{Corresponding author.}}
\affil[1]{\small Institute of Mathematics, Vietnam Academy of Science and Technology, 18 Hoang Quoc Viet, Hanoi, 10307 Vietnam\break
	\href{mailto:httuan@math.ac.vn}{httuan@math.ac.vn}}
\affil[2]{\small Institute of Mathematics, Vietnam Academy of Science and Technology, 18 Hoang Quoc Viet, Hanoi, 10307 Vietnam\break   \href{mailto:phongtham.hn@gmail.com}{phongtham.hn@gmail.com}}
\affil[3]{\small Department of Mathematics and Scientific Computing, University of Graz, Graz, Austria\break  \href{mailto:quoc.tang@uni-graz.at}{quoc.tang@uni-graz.at}}
\date{}
\begin{document}
	\maketitle
	\begin{abstract}
		The fast reaction limit for a nonlinear bulk-surface reaction-diffusion system is investigated. This  system describes a reversible reaction with arbitrary stoichiometric coefficients, where one chemical is present in a bounded vessel $\Omega$ and the other chemical lies only on the boundary $\partial\Omega$ where the reaction takes place. In the limit as the reaction rate constant tends to infinity, we prove that the solution converges in $L^p(0,T;L^p(\Omega))$ to the solution of a heat equation with nonlinear dynamical boundary condition. This is obtained by showing a-priori estimates of solutions which are uniform in the reaction rate constants. In order to overcome the difficulty caused by the bulk-surface coupling, we consider the limit in suitable product spaces where the Aubin-Lions lemma is applicable. Moreover, in the case of equal stoichiometric coefficients, we obtain the convergence rate of the fast reaction limit by exploiting suitable estimates of the limiting system.
	\end{abstract}

	\section{Introduction}
	
	Many nature phenomena involve processes occurring simultaneously in a domain's interior (the bulk) and on its boundary (the surface), with intricate coupling between these two regions. From cell biological signalling proceeses (\cite{betschinger2003complex,fellner2018well,hausberg2018well}), to the ecological systems (\cite{berestycki2015effect,berestycki2013influence}) and in the area of material science (\cite{glitzky2013gradient,mielke2012thermomechanical}), the interaction between bulk and surface dynamics is fundamental to understanding these systems. The study of these problems gives rise to bulk-surface (or volume-surface) systems, which have been investigated extensively in the last decade. In this paper, we study the fast reaction limit of a nonlinear bulk-surface system modelling a reversible chemical reaction.
	
	\medskip
	To describe the problem under consideration, let $\Omega\subset\mathbb R^N$ be a bounded domain with smooth boundary $\Gamma:= \partial\Omega$, and $\alpha, \beta\ge 1$ be arbitrary constants. Let $\mathcal U$ be a bulk-chemical and $\mathcal V$ be a surface-chemical, and assume that they react on the surface $\Gamma$ according the following chemical reaction
	\begin{equation*}
		\alpha\mathcal{U} \underset{k}{\overset{k}{\leftrightarrows}} \beta\mathcal{V} 
	\end{equation*}
	where $k>0$ is the reaction rate constants. Assume that $\mathcal{U}$ diffuses in $\Omega$ and $\mathcal{V}$ diffuses along $\Gamma$, respectively, we can apply Fick's law for the diffusion and the mass action law for the reaction to obtain the following bulk-surface system
	\begin{equation}\label{sys}
		\begin{cases}
			\partial_t u - d_u\Delta u = 0, &x\in\Omega, t>0,\\
			\partial_{\mathbf n}u = -\alpha k(u^\alpha - v^\beta), &x\in\Gamma, t>0,\\
			\partial_tv - d_v\Delta_{\Gamma}v = \beta k(u^\alpha - v^\beta), &x\in\Gamma, t>0, \\
			u(x,0) = u_0(x), & x \in \Omega, \\
			v(x,0) = v_0(x), & x \in \Gamma.
		\end{cases}
	\end{equation}
	Here $u(x,t)$ and $v(x,t)$ represent for the concentrations of $\mathcal{U}$ on $\Omega$ and $\mathcal{V}$ on $\Gamma$, respectively, 
$\Delta_\Gamma$ is the Laplace--Beltrami operator on $\Gamma$ (see, e.g. \cite{jost2013riemannian}), $\mathbf{n}$ stands for the outward pointing unit normal vector field on $\Gamma$, and $d_u, d_v>0$ are diffusion coefficients. Moreover, the initial condition $(u_0,v_0)$ is assumed non-negative and bounded. From \eqref{sys}, we formally have the following mass conservation law
\begin{equation*}\label{eq:conservation}
	\alpha\int_\Omega u(x,t) dx + \beta\int_\Gamma v(x,t)dS = \alpha\int_\Omega u_0(x)dx + \beta\int_\Gamma v_0(x)dS = M_0 \geq 0
\end{equation*}
for all $t > 0$. 

\medskip
Bulk-surface systems similar to \eqref{sys} have recently attracted a lot of attention, since such systems arise naturally from many applications. Various aspects of these systems have been studied, including well-posedness (\cite{fellner2018well, hausberg2018well}), the stability of solutions (\cite{alfaro2026functional,stolerman2019stability}), numerical methods (\cite{alfaro2025long,egger2018analysis}), rigorous derivation (\cite{li2021bulk, bobrowski2025existence}), and many others. On the other hand, fast reaction limits for reaction-diffusion systems have been investigated extensively in recent years. These problems occur when reaction rate constants become significantly faster than other parameters, such as diffusion coefficients, and the limit system can be of various type, ranging from nonlinear diffusion system \cite{bothe2003reaction}, cross-diffusion system \cite{bothe2012cross,daus2020cross}, free boundary problem \cite{bothe2011instantaneous,murakawa2011fast} or reaction-diffusion system with different kinetics \cite{tang2024rigorous}. We also refer the interested reader to the \cite{hilhorst2023lecture} and references therein. 

\medskip
The study of singular limits for bulk-surface systems has been still limited. There are several works considering the fast diffusion limits such as \cite{hausberg2018well,niethammer2020bulk,logioti2021parabolic}, where the limit is an obstacle or free boundary problem. Up to our knowledge, the only works considering fast reaction limit for bulk-surface system are \cite{fellner2016quasi} and  \cite{henneke2016fast}, where the latter considered system \eqref{sys} in the linear setting, i.e. $\alpha = \beta = 1$. In this paper, we extend this result to the nonlinear case, that is $\alpha, \beta \ge 1$ arbitrary, which has been left as an open problem in \cite{henneke2016fast}. In order to go more into details, we first look at the formal limit. Denote by $(u_k,v_k)$ the solution to \eqref{sys} corresponding to $k>0$. Formally, as $k \to \infty$, we expect that $u_k \to u$ in $\Omega\times(0,T)$, while $v_k \to v$ and $u_k^\alpha - v_k^\beta \to 0$ in $\Gamma\times (0,T)$. Thus, we expect the limit system to read as
\begin{equation}\label{limit_system}
	\begin{cases}
		\partial_t w - d_u\Delta w = 0, &x\in\Omega, t>0,\\
		d_u\nabla w \cdot {\mathbf n} = -\frac{\alpha}{\beta}\big(\partial_t (w^{\alpha/\beta}) - d_v\Delta_\Gamma (w^{\alpha/\beta})\big), &x\in\Gamma, t>0,\\
		w(x,0) = u_0(x), &x\in\Omega,\\
		w|_{\Gamma}(x,0) = v_0^{\beta/\alpha}(x), &x\in\Gamma.
	\end{cases}
\end{equation}
This is a parabolic equation with {\it nonlinear} dynamical boundary condition, which, up to our knowledge, has not been studied in the literature, except for the case $\alpha = \beta$, see e.g.  (\cite{VAZQUEZ20112143, escher1993quasilinear}). 

\medskip
Our first main result is the strong convergence of the fast reaction limit. Thanks to the reversibility of the reaction, the system \eqref{sys} possesses an entropy functional which is decreasing in time. This helps to get certain a-priori estimates of the solutions which are uniformly in $k$, and moreover it shows that $u_k^\alpha - v_k^{\beta} \to 0$ as $k\to \infty$. In order to get the strong convegence of $u_k$ and $v_k$, we need certain estimates of the time derivative. In \cite{bothe2003reaction}, when two equations are both posed in the bulk, the authors therein sum them up and show the strong convergence of the sum of the solutions, as consequently for each separately.  
Due to the bulk-surface coupling, this idea of summing up the equations does not work. For the linear case, the authors in \cite{henneke2016fast} utilized the linearity of the system and Ball's energy method to prove the convergence of fast reaction limit without having to invoke the Aubin-Lions lemma. This technique, however, seems not to be applicable to the nonlinear system \eqref{sys}. In order to overcome this issue, our main idea is to consider the solutions $(u_k,v_k)$ in a product space, and show that the time derivative of this pair is bounded in $L^2(0,T;Z^*)$ where $Z^*$ is the dual space of $Z:= \{(\phi,\phi|_{\Gamma}) \text{ for all } \phi\in H^1(\Omega) \text{ such that } \phi|_{\Gamma}\in H^1(\Gamma)\}$. Our second main result is the convergence rate of the fast reaction limits. More precisely, we consider the case of equal stoichiometric coefficients $\alpha = \beta$, and utilize certain properties of the limit system to estimate the difference $u_k - u$ and $v_k - v$ directly to get the convergence rate
\begin{equation*}
	\|u_k(t) - w(t)\|_{L^2(\Omega)} + \|v_k(t) - w|_{\Gamma}(t)\|_{L^2(\Gamma)} \le Ck^{-1/2}, \quad \forall t\in (0,T).
\end{equation*}

\medskip
This paper will be structured as follows. In Section \ref{sec2}, we will consider the fast reaction limits, where we first prove the uniqueness and existence of a solution for the system \eqref{sys}, then we will show that as the reaction rate constant tends infinity, the solution of \eqref{sys} will converge to that of the limit system. In Section 3, we will start with some properties of the limit system and quantify the convergence rate.

\medskip
For the rest of this paper, we will use the notation $\langle \cdot, \cdot \rangle_{\Omega}$ and $\langle \cdot, \cdot \rangle_{\Gamma}$ for the duality between $H^{-1}(\Omega)$ and $H^1(\Omega)$, $H^{-1}(\Gamma)$ and $H^1(\Gamma)$, respectively. Besides, we will change the variable $k = 1/\epsilon$, which means that the question becomes considering the limit $\epsilon \to 0^+$.
\section{Fast reaction limit}\label{sec2}
\subsection{Existence and uniqueness of solution}
The existence of a unique global solution to \eqref{sys} is a special case of \cite{fellner2018well}. Therein the reaction rate constants can be of general form, i.e. $k = k(x,t)$, and the authors used a compliciated mechanism of upper and lower solutions to get the results. In this paper, we provide a more elementary and direct proof for the existence of \eqref{sys}.
First, we recall the definition of a weak solution to \eqref{sys}.
\begin{dn}\label{def:weak_sol}
	A pair of functions $(u,v)$ is called a weak solution of \eqref{sys} on $(0,T)$ (with given $T > 0$) if 
	\begin{align}\label{def:regualar}
		&u \in C([0,T]; L^2(\Omega)), \quad u \in L^\infty(0,T;L^\infty(\Omega)) \cap L^2(0,T;H^1(\Omega)), \\ 
		&v \in C([0,T]; L^2(\Gamma)), \quad v \in L^\infty(0,T;L^\infty(\Gamma)) \cap L^2(0,T;H^1(\Gamma)),
	\end{align}
	and the following weak formulation holds
	\begin{equation}\label{wf1}
		\left\{
		\begin{aligned}
			&\int_0^T \int_\Omega (-u\varphi_t + d_u \nabla u \nabla \varphi)dxdt = \int_\Omega u_0\varphi(0)dx - \frac{\alpha}{\epsilon} \int_0^T\int_\Gamma (u^\alpha - v^\beta)\varphi dSdt \\
			&\int_0^T \int_\Gamma (-v\psi_t + d_v \nabla_\Gamma v \nabla_\Gamma \psi)dSdt = \int_\Gamma v_0\psi(0)dS + \frac{\beta}{\epsilon} \int_0^T\int_\Gamma (u^\alpha - v^\beta)\psi dSdt
		\end{aligned}
		\right.
	\end{equation}
	for all test functions $\varphi \in C^1([0,T];L^2(\Omega))$ $\cap$ $L^2(0,T;H^1(\Omega))$ and $\psi \in C^1([0,T]$ $;L^2(\Gamma)) \, \cap \, L^2(0,T;H^1(\Gamma))$, with $\varphi(T)  \equiv 0, \psi(T) \equiv 0$.
\end{dn}
\begin{dl}\label{exist_sol}
	For any non-negative and bounded initial data $(u_0,v_0)\in L^{\infty}(\Omega)\times L^{\infty}(\Gamma)$, there exists a unique global non-negative weak solution to \eqref{sys}.
\end{dl}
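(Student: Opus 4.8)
The plan is to prove existence by a truncation-plus-fixed-point scheme, to recover a solution of the original system \eqref{sys} through a priori bounds that are independent of the truncation, and finally to establish uniqueness by a Grönwall argument exploiting those bounds. To begin, I would regularize the nonlinearity: for a parameter $M>0$ I would introduce globally Lipschitz, monotone, non-negative functions $r_M, s_M:\mathbb{R}\to[0,\infty)$ satisfying $r_M(\rho)=\rho^\alpha$ and $s_M(\rho)=\rho^\beta$ for $0\le \rho\le M$, with $r_M(\rho)=s_M(\rho)=0$ for $\rho\le 0$ and a bounded Lipschitz extension for $\rho>M$, and replace $u^\alpha-v^\beta$ throughout by $R_M(u,v):=r_M(u)-s_M(v)$. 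Since $R_M$ is globally Lipschitz, the reaction is now a well-behaved source and the problem becomes amenable to a contraction argument.

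For the fixed point I would observe that, once the reaction is frozen, the two equations decouple: given $(\bar u,\bar v)$ in a suitable space over $(0,\tau)$, the source $g:=R_M(\bar u,\bar v)\in L^2(0,\tau;L^2(\Gamma))$ is fixed, and $(u,v)=\Phi(\bar u,\bar v)$ solves the two independent linear parabolic problems $\partial_t u-d_u\Delta u=0$ in $\Omega$ with $\partial_{\mathbf n}u=-\tfrac{\alpha}{\epsilon}g$ on $\Gamma$, and $\partial_t v-d_v\Delta_\Gamma v=\tfrac{\beta}{\epsilon}g$ on $\Gamma$, with the prescribed initial data. Standard linear parabolic theory gives well-posedness and the smoothing $u\in L^2(0,\tau;H^1(\Omega))$, $v\in L^2(0,\tau;H^1(\Gamma))$. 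The delicate point is that $g$ depends on the \emph{trace} of $\bar u$ on $\Gamma$, which is not controlled by $\|\bar u\|_{L^2(\Omega)}$; hence the fixed point must be run in a norm that controls the trace, using the $L^2(0,\tau;H^1(\Omega))$ regularity of the heat flow together with the trace inequality. With the global Lipschitz constant $L$ of $R_M$ being independent of the data, the contraction estimate for $\Phi$ holds on a time interval $\tau$ chosen uniformly, so iterating produces a unique global solution of the truncated system.

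Next I would establish the two a priori bounds that render the truncation inactive. For non-negativity, testing the weak formulations with $u^-:=\max\{-u,0\}$ and $v^-$ yields $\tfrac12\tfrac{d}{dt}\|u^-\|_{L^2(\Omega)}^2+d_u\|\nabla u^-\|_{L^2(\Omega)}^2=-\tfrac{\alpha}{\epsilon}\int_\Gamma s_M(v)\,u^-\,dS\le 0$, since $r_M(u)=0$ where $u<0$ and $s_M\ge 0$, and symmetrically $\tfrac12\tfrac{d}{dt}\|v^-\|_{L^2(\Gamma)}^2\le -\tfrac{\beta}{\epsilon}\int_\Gamma r_M(u)\,v^-\,dS\le 0$; hence $u,v\ge 0$. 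For the upper bound I would use the invariant-rectangle structure: choose $C_1:=\max\{\|u_0\|_{L^\infty(\Omega)},\|v_0\|_{L^\infty(\Gamma)}^{\beta/\alpha}\}$ and $C_2:=C_1^{\alpha/\beta}$, so that $C_1^\alpha=C_2^\beta$ and $C_1\ge\|u_0\|_\infty$, $C_2\ge\|v_0\|_\infty$. Writing $w:=u-C_1$, $z:=v-C_2$ and testing with $w^+$ and $z^+$, the monotonicity of $r_M,s_M$ makes the diagonal reaction contributions non-positive, while the off-diagonal (cross) terms are bounded via the Lipschitz property and Young's inequality and then absorbed into the gradient terms using the trace inequality; Grönwall then gives $u\le C_1$ and $v\le C_2$. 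Choosing $M>\max\{C_1,C_2\}$ guarantees $0\le u,v\le M$, so $R_M(u,v)=u^\alpha-v^\beta$ and the solution of the truncated system is in fact a weak solution of \eqref{sys} in the sense of Definition~\ref{def:weak_sol}.

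Finally, for uniqueness, any two solutions with the same data both obey the bound $0\le u,v\le M$, on which $\rho\mapsto\rho^\alpha$ and $\rho\mapsto\rho^\beta$ are Lipschitz; subtracting the weak formulations, testing the differences against themselves, and absorbing the boundary coupling term through Young's inequality and the trace inequality produces a Grönwall inequality forcing the differences to vanish. I expect the genuine obstacle throughout to be the bulk–surface coupling, concentrated in the fact that the reaction enters through the boundary trace of $u$: this trace is not seen by the $L^2(\Omega)$ norm, so both the contraction step and the cross-term absorption in the a priori estimates rely on the trace inequality and the parabolic smoothing that control $u$ in $L^2(0,\tau;H^1(\Omega))$ and hence its trace in $L^2(0,\tau;L^2(\Gamma))$; this is precisely the coupling captured by the trace space $Z$ and its dual introduced above.
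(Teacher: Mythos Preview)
Your proposal is correct and takes a genuinely different route from the paper. The paper approximates the nonlinearity by the bounded function $F_\delta(u,v)=(u^\alpha-v^\beta)/(1+\delta|u^\alpha-v^\beta|)$, constructs a solution of the $\delta$-system by a Galerkin scheme, and then passes to the limit $\delta\to 0$ via Aubin--Lions; the uniform $L^\infty$ bounds needed for this limit come from the family of $L^p$-type entropies $E_p[u,v]=\tfrac{1}{p\alpha^2+\alpha}\int_\Omega u^{p\alpha+1}+\tfrac{1}{p\beta^2+\beta}\int_\Gamma v^{p\beta+1}$, which are monotone along trajectories, and the gradient bounds from the logarithmic entropy. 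Your scheme instead truncates to a globally Lipschitz reaction, builds the truncated solution by a contraction argument, and then shows the truncation is inactive via an invariant-rectangle estimate based on the explicit levels $C_1^\alpha=C_2^\beta$. What this buys you is a shorter existence proof with no compactness step and no entropy machinery; what the paper's approach buys is that the very same $E_p$ entropies are reused verbatim in the fast-reaction analysis (Lemma~\ref{lm:ub}) to obtain the $\epsilon$-independent $L^\infty$ bound, so its existence proof doubles as preparation for Section~2.2. Note that your invariant-rectangle argument, while correct, uses a Gr\"onwall constant depending on $\epsilon$ through the trace absorption step; this is harmless for Theorem~\ref{exist_sol} but means the argument would need to be replaced (e.g.\ by the paper's entropies) when you later want bounds uniform in $\epsilon$.
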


Let $\delta>0$ and consider the following approximate system
\begin{equation}\label{approximation_system}
	\begin{cases}
		\partial_t u_\delta - d_u\Delta u_\delta = 0, &x\in\Omega, t >0,\\
		\partial_{\mathbf n} u_{\delta} = -\frac{\alpha}{\epsilon}  F_\delta(u_\delta,v_\delta), &x\in\Gamma, t>0,\\
		\partial_t v_\delta - d_v\Delta_\Gamma v_\delta = \frac{\beta}{\epsilon} F_\delta(u_\delta, v_\delta), &x\in \Gamma, t>0,\\
		u_\delta(x,0) = u_0(x), &x\in \Omega, \\
		v_\delta(x,0) = v_0(x), &x\in \Gamma,
	\end{cases}
\end{equation}
where
\begin{equation*}
	F_\delta(u,v):= \frac{u^\alpha - v^\beta}{1 + \delta|u^\alpha - v^\beta|}.
\end{equation*}
Remark that $F_\delta(u,v)$ is locally Lipschitz continuous in $\mathbb R^2$ and for each $\delta > 0$, we have
\begin{equation*}
	|F_\delta(u,v)| \le \frac{1}{\delta}\quad  \forall (u,v) \in \mathbb R_+^2.
\end{equation*}
To show the existence of the approximate solution, we use Galerkin's method (see e.g. \cite[Theorem 7.1]{evans}).
\begin{bd}\label{lm:exist_approx_sol}
	The system \eqref{approximation_system} has unique (weak) solution $(u_\delta,v_\delta)$ for each parameter $\delta > 0$.
\end{bd}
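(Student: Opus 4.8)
The plan is to follow the Galerkin scheme suggested after the statement. Because the truncated nonlinearity satisfies $|F_\delta|\le 1/\delta$, the coupling between the bulk and surface equations enters every estimate only as a uniformly bounded forcing term, and this is precisely what makes the approximate problem tractable. Concretely, I would fix the orthonormal basis $\{\phi_j\}_{j\ge1}$ of $L^2(\Omega)$ given by the eigenfunctions of $-\Delta$ with homogeneous Neumann boundary condition (so that $\{\phi_j\}$ is also orthogonal in $H^1(\Omega)$), and the basis $\{\eta_j\}_{j\ge1}$ of $L^2(\Gamma)$ given by the eigenfunctions of $-\Delta_\Gamma$. Seeking $u_m=\sum_{j=1}^m a_j(t)\phi_j$ and $v_m=\sum_{j=1}^m b_j(t)\eta_j$ and projecting the weak form of \eqref{approximation_system} onto these spans turns the problem into a system of $2m$ ODEs for $(a_j,b_j)$, coupled only through the boundary integrals $\int_\Gamma F_\delta(u_m,v_m)\phi_j\,dS$ and $\int_\Gamma F_\delta(u_m,v_m)\eta_j\,dS$ (where $u_m$ enters through its trace $u_m|_\Gamma=\sum_j a_j\phi_j|_\Gamma$). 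Since $s\mapsto s/(1+\delta|s|)$ is globally Lipschitz and the power maps are locally Lipschitz, the right-hand sides are locally Lipschitz in $(a,b)\in\mathbb R^{2m}$, so Picard--Lindel\"of yields a unique local solution; the bound $|F_\delta|\le1/\delta$ prevents finite-time blow-up and extends it to all of $[0,T]$.

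Next I would derive a priori bounds uniform in $m$. Multiplying the $a_j$-equation by $a_j$, the $b_j$-equation by $b_j$ and summing gives
\begin{equation*}
\tfrac12\tfrac{d}{dt}\big(\|u_m\|_{L^2(\Omega)}^2+\|v_m\|_{L^2(\Gamma)}^2\big)+d_u\|\nabla u_m\|_{L^2(\Omega)}^2+d_v\|\nabla_\Gamma v_m\|_{L^2(\Gamma)}^2 = \int_\Gamma F_\delta(u_m,v_m)\Big(-\tfrac{\alpha}{\epsilon}u_m+\tfrac{\beta}{\epsilon}v_m\Big)dS.
\end{equation*}
Bounding the right-hand side by $\tfrac{1}{\delta}\int_\Gamma(\tfrac{\alpha}{\epsilon}|u_m|+\tfrac{\beta}{\epsilon}|v_m|)\,dS$, then using the trace inequality $\|u_m\|_{L^2(\Gamma)}\le C\|u_m\|_{H^1(\Omega)}$ with Young's inequality to absorb the gradient terms, and finally Gr\"onwall's lemma, gives bounds for $u_m$ in $L^\infty(0,T;L^2(\Omega))\cap L^2(0,T;H^1(\Omega))$ and for $v_m$ in $L^\infty(0,T;L^2(\Gamma))\cap L^2(0,T;H^1(\Gamma))$, independent of $m$. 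Testing with arbitrary $\varphi\in H^1(\Omega)$ (resp. $\psi\in H^1(\Gamma)$) after projecting onto the span, and using the $H^1$-stability of the projections (valid for the eigenfunction bases) together with $|F_\delta|\le1/\delta$, yields bounds for $\partial_t u_m$ in $L^2(0,T;(H^1(\Omega))^*)$ and $\partial_t v_m$ in $L^2(0,T;(H^1(\Gamma))^*)$.

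With these estimates I would extract a subsequence converging weakly in $L^2(0,T;H^1)$, weakly-$*$ in $L^\infty(0,T;L^2)$, and with weakly convergent time derivatives. The main obstacle is passing to the limit in the nonlinear boundary term, which requires \emph{strong} convergence of the trace $u_m|_\Gamma$ on $\Gamma\times(0,T)$. Plain Aubin--Lions with $H^1(\Omega)\hookrightarrow\hookrightarrow L^2(\Omega)\hookrightarrow(H^1(\Omega))^*$ only gives $u_m\to u$ strongly in $L^2(\Omega\times(0,T))$, which does not control the trace; to remedy this I would instead use the intermediate space $H^s(\Omega)$ with $1/2<s<1$, for which $H^1(\Omega)\hookrightarrow\hookrightarrow H^s(\Omega)$ and the trace $H^s(\Omega)\to L^2(\Gamma)$ is continuous, so that Aubin--Lions gives $u_m\to u$ strongly in $L^2(0,T;H^s(\Omega))$ and hence $u_m|_\Gamma\to u|_\Gamma$ strongly in $L^2(\Gamma\times(0,T))$. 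Together with $v_m\to v$ strongly in $L^2(\Gamma\times(0,T))$, this yields a.e. convergence on $\Gamma\times(0,T)$; since $F_\delta$ is continuous and bounded by $1/\delta$, dominated convergence gives $F_\delta(u_m,v_m)\to F_\delta(u,v)$ in $L^2(\Gamma\times(0,T))$, and passing to the limit in the projected equations (for fixed $j$, then by density of the basis) produces a weak solution.

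Finally, for uniqueness I would take two solutions, subtract the weak formulations, and test with the differences $U=u_1-u_2$ and $V=v_1-v_2$. The reaction contribution is controlled using that $s\mapsto s/(1+\delta|s|)$ is $1$-Lipschitz and that the power maps are Lipschitz on the range of the (bounded) solutions --- the required $L^\infty$ bounds following from the boundedness of the Neumann flux $|\partial_{\mathbf n}u_\delta|\le\frac{\alpha}{\epsilon\delta}$ via a maximum-principle/semigroup argument --- so that the boundary integrals are bounded by $C(\|U\|_{L^2(\Gamma)}^2+\|V\|_{L^2(\Gamma)}^2)$; after absorbing the trace of $U$ into the dissipation, Gr\"onwall's lemma forces $U\equiv0$, $V\equiv0$. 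I expect the trace-compactness step to be the crux, since it is exactly the bulk-surface coupling that obstructs the naive argument.
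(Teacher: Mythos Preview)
Your proposal is correct and follows essentially the same Galerkin scheme as the paper: the same eigenfunction bases, the same energy estimate (bounding $|F_\delta|\le1/\delta$ and absorbing the boundary trace via the trace inequality with small parameter), the same time-derivative bound, Aubin--Lions compactness, and uniqueness by Gr\"onwall after invoking $L^\infty$ bounds. Two small differences deserve mention. First, you are in fact more careful than the paper on the passage to the limit in the nonlinear boundary term: the paper applies Aubin--Lions only with the triplet $H^1(\Omega)\hookrightarrow\hookrightarrow L^2(\Omega)\hookrightarrow (H^1(\Omega))^*$ and then simply asserts that this ``is sufficient to pass to the limit'', without addressing how strong $L^2(\Omega\times(0,T))$ convergence controls $u_m|_\Gamma$; your use of the intermediate space $H^s(\Omega)$ with $\tfrac12<s<1$ closes this gap cleanly. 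Second, the paper's proof additionally establishes non-negativity of $(u_\delta,v_\delta)$ via an auxiliary system with $F_\delta(u_\delta^+,v_\delta^+)$; this is not part of the lemma statement itself but is used in the subsequent lemmas, so you may wish to include it.
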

\begin{proof}
	Similarly to Definition \ref{def:weak_sol}, $(u_\delta, v_\delta)$ is a weak solution to \eqref{approximation_system}  if 
	\begin{align}\label{def:regualar_approximation}
		&u_\delta \in C([0,T]; L^2(\Omega)), \quad u_\delta \in L^\infty(0,T;L^\infty(\Omega)) \cap L^2(0,T;H^1(\Omega)), \\ 
		&v_\delta \in C([0,T]; L^2(\Gamma)), \quad v_\delta \in L^\infty(0,T;L^\infty(\Gamma)) \cap L^2(0,T;H^1(\Gamma)),
	\end{align}
	and the following weak formulation holds
	\begin{equation}\label{wf_approximation}
		\left\{
		\begin{aligned}
			&\int_0^T \int_\Omega (-u_\delta\varphi_t + d_u \nabla u_\delta \nabla \varphi)dxdt = \int_\Omega u_0\varphi(0)dx - \frac{\alpha}{\epsilon} \int_0^T\int_\Gamma F_\delta(u_\delta,v_\delta)\varphi dSdt \\
			&\int_0^T \int_\Gamma (-v_\delta\psi_t + d_v \nabla_\Gamma v_\delta \nabla_\Gamma \psi)dSdt = \int_\Gamma v_0\psi(0)dS + \frac{\beta}{\epsilon} \int_0^T\int_\Gamma F_\delta(u_\delta,v_\delta)\psi dSdt
		\end{aligned}
		\right.
	\end{equation}
	for all test functions $\varphi \in C^1([0,T];L^2(\Omega))$ $\cap$ $L^2(0,T;H^1(\Omega))$ and $\psi \in C^1([0,T]$ $;L^2(\Gamma)) \, \cap \, L^2(0,T;H^1(\Gamma))$, with $\varphi(T) \equiv 0, \psi(T) \equiv 0$.\\
	
	\medskip
	Choosing an orthogonal basis $(\varphi_k, \psi_k)_{k=1}^\infty$ of  $L^2(\Omega) \times L^2(\Gamma)$ (for example, $\varphi_k, \psi_k$ are the eigenfunctions (in the order that the eigenvalues are increasing) of the Laplace operator on $\Omega$ with Neumann boundary condition and Laplace--Beltrami operator on $\Gamma$ (see, e.g., \cite{rosenberg1997laplacian}), respectively). For each fixed positive integer $m$, we seek the coefficents $(a_{i,m}(t), b_{i,m}(t))$ of $(u_{\delta,m}, v_{\delta,m})$, the projection of $(u_\delta(t), v_\delta(t))$ on finite-dimenional subspace spanned by $(\varphi_i, \psi_i)_{i=1}^m$:
	\begin{equation*}
		\begin{cases}
			u_{\delta,m}(\cdot,t) = \sum_{i=1}^m \varphi_i a_{i,m}(t), \\
			v_{\delta,m}(\cdot,t) = \sum_{i=1}^m \psi_i b_{i,m}(t),
		\end{cases}
	\end{equation*}
	that satisfies the following finite systems for a.e. $t \in [0,T]$ and each $j = 1,\dots,m$
	\begin{equation*}
		\left\{
		\begin{aligned}
			&\langle \partial_t u_{\delta,m},\varphi_j\rangle_\Omega + d_u\int_\Omega \nabla u_{\delta,m} \nabla \varphi_j dx = \int_\Gamma-\frac{\alpha}{\epsilon}F_{\delta}(u_{\delta,m}v_{\delta,m}) \varphi_j dS  \\
			&\langle \partial_t v_{\delta,m},\psi_j\rangle_\Gamma + d_v\int_\Gamma \nabla_\Gamma v_{\delta,m} \nabla_\Gamma \psi_j dS = \int_\Gamma \frac{\beta}{\epsilon}F_{\delta}(u_{\delta,m},v_{\delta,m}) \psi_j dS 
		\end{aligned}
		\right.
	\end{equation*}
	and for the inital condition, we require
	\begin{equation*}
		a_{i,m}(0) = \int_\Omega u_0 \varphi_i dx, \quad b_{i,m}(0) = \int_\Gamma v_0 \psi_i dS,\quad \forall i = 1,..,m.
	\end{equation*}
	Thanks to the local Lipschitz continuity and boundedness of $F_\delta$, this system of ODEs has a unique global solution. We now show the boundedness of $\{u_{\delta,m
	},v_{\delta,m}\}_{m=1}^\infty$ uniformly in $m$. From the finite systems, we get
	\begin{align}
		\frac{1}{2}\frac{d}{dt}\left(\|u_{\delta,m}\|^2_{L^2(\Omega)} + \|v_{\delta,m}\|^2_{L^2(\Gamma)}\right) + d_u \|\nabla u_{\delta,m}\|^2_{L^2(\Omega)} + d_v \|\nabla_\Gamma v_{\delta,m}\|^2_{L^2(\Gamma)} \nonumber \\ \leq \frac{|\beta - \alpha|}{\epsilon \,\delta}\int_\Gamma |u_{\delta,m}|+ |v_{\delta,m}| \,dS. \label{eq:1.1.1}
	\end{align}
	The right hand side can be estimated by
	\begin{align}
		\frac{|\beta - \alpha|}{\epsilon \,\delta}\int_\Gamma |u_{\delta,m}|+ |v_{\delta,m}| \,dS &\leq C(\|u_{\delta,m}\|_{L^2(\Gamma)} + \|v_{\delta,m}\|_{L^2(\Gamma)}) \nonumber \\&\leq C(\|u_{\delta,m}\|^2_{L^2(\Omega)} + \|v_{\delta,m}\|^2_{L^2(\Gamma)}) + C \nonumber \\&+ \frac{d_u}{2} \|\nabla u_{\delta,m}\|^2_{L^2(\Omega)}, \label{eq:1.1.2}
	\end{align}
	where the second one is the consequence of modified trace Theorem : for any $\kappa>0$, there exists a constant $C_\kappa$ such that $\|u\|_{L^2(\Gamma)} \leq \kappa\|\nabla u\|_{L^2(\Omega)} + C_\kappa \|u\|_{L^2(\Omega)}$. Combining \eqref{eq:1.1.1} and \eqref{eq:1.1.2}, we get
	\begin{equation*}
		\frac{1}{2}\frac{d}{dt}\left(\|u_{\delta,m}\|^2_{L^2(\Omega)} +  \|v_{\delta,m}\|^2_{L^2(\Gamma)}\right) \leq C (\|u_{\delta,m}\|^2_{L^2(\Omega)} +  \|v_{\delta,m}\|^2_{L^2(\Gamma)}) + C
	\end{equation*}
	Then, by Gronwall's inequality, we have the following boundedness for a.e. $ t \in [0,T]$ $$\|u_{\delta,m}(t)\|^2_{L^2(\Omega)} +  \|v_{\delta,m}(t)\|^2_{L^2(\Gamma)} \leq C.$$  
	Next, integrating \eqref{eq:1.1.1} on $(0,T)$, and using the fact that $\|u_{\delta,m}\|^2_{L^2(\Omega)} +  \|v_{\delta,m}\|^2_{L^2(\Gamma)}$ are bounded, we also get $$\|\nabla u_{\delta,m}\|^2_{L^2(0,T;L^2(\Omega))} + \|\nabla v_{\delta,m}\|^2_{L^2(0,T;L^2(\Gamma))} \leq C,$$ where $C$ does not depend on $m$. 
	In other word, the sequence $\{(u_{\delta,m}, v_{\delta,m})\}_{m=1}^\infty$ is bounded uniformly in $L^2(0,T;H^1(\Omega) \times H^1(\Gamma))$. 
	From to the finite dimensional system, we have for $\varphi \in L^2(0,T;H^1(\Omega))$
	\begin{align*}
		\int_0^T \langle \partial_t u_{\delta,m}, \varphi \rangle dt &\leq d_u \|\nabla u_{\delta,m}\|_{L^2(0,T;L^2(\Omega))}\|\nabla \varphi\|_{L^2(0,T;L^2(\Omega))} + \frac{\alpha}{\epsilon\, \delta}\int_0^T\int_\Gamma \varphi dSdt \\ &\leq C\|\varphi\|_{L^2(0,T;H^1(\Omega))},
	\end{align*}
	where $C$ does not depend on $\varphi$ and $m$. So, we have $\{\partial_t u_{\delta,m}\}_{m=1}^\infty$ is bounded uniformly in $L^2(0,T;(H^{1}(\Omega)^*)$. Then, by applying Aubin--Lions lemma (see, e.g. \cite{moussa2016some}) there exists a subsequence of $\{u_{\delta,m}\}$ converge strongly to some $u_\delta$ in $L^2(0,T;L^2(\Omega))$ as $m \to \infty$. Moreover, $u_\delta \in L^2(0,T;H^1(\Omega))$. Similarly, there also exists a subsequence of $\{v_{\delta,m}\}$ converge strongly to some $v_\delta$ in $L^2(0,T;L^2(\Gamma))$ and $v_\delta \in L^2(0,T;H^1(\Gamma))$. This is sufficient to pass to the limit as $m\to\infty$ the finite system to obtain that the limit $(u_\delta,v_\delta)$ satisfies the weak formulation \eqref{wf_approximation}, which leads to the existence of solution of \eqref{approximation_system}. Since $|F_\delta(u,v)|\le 1/\delta$ in $\mathbb R^2$, the boundedness of $(u_\delta,v_\delta)$ follows from standard arguments for parabolic equations. Combining this boundedness with the local Lipschitz continuity of $F_\delta$, we see that this solution is also unique. 
	
	\medskip
	Next, we prove the non-negativity of the solution by considering the auxillary system 
	\begin{equation}\label{aux_system}
		\begin{cases}
			\partial_t u_\delta - d_u\Delta u_\delta = 0, &x\in\Omega, t>0\\
			\partial_{{\mathbf n}}u_{\delta} = -\frac{\alpha}{\epsilon}  F_\delta(u^+_\delta,v^+_\delta), &x\in\Gamma, t>0\\
			\partial_t v_\delta - d_v\Delta_\Gamma v_\delta = \frac{\beta}{\epsilon}F_\delta(u^+_\delta, v^+_\delta), &x\in \Gamma, t>0\\
			u_\delta(0) = u_0, & x\in \Omega,\\
			v_\delta(0) = v_0 & x \in \Gamma,
		\end{cases}
	\end{equation}
	where $u^+ = \max\{0,u\}$ and $u^- = \max\{0,-u\}$. It is observed that
	\begin{equation*}
		\int_\Gamma F_{\delta}(u_\delta^+,v_\delta^+)u_\delta^- dS = \int_\Gamma \frac{(u_\delta^+)^\alpha - (v_\delta^+)^\beta}{1 + \delta |(u_\delta^+)^\alpha - (v_\delta^+)^\beta|}u_\delta^- dS \leq 0
	\end{equation*}
	since $u_\delta^+ u_\delta^- = 0$ and $u_\delta^-, v_\delta^+ \geq 0$. Similarly, we have 
	\begin{equation*}
		\int_\Gamma F_{\delta}(u_\delta^+,v_\delta^+)v_\delta^- dS \geq 0
	\end{equation*}
	Multiply the first and the third equations of \eqref{aux_system} with $(-u^-_\delta, -v^-_\delta)$ and applying integration by parts formula, we obtain
	\begin{align*}
		\frac{1}{2}\frac{d}{dt} (\|u_\delta^-\|^2_{L^2(\Omega)} + \|v_\delta^-\|^2_{L^2(\Gamma)}) + d_u \|\nabla u_\delta^-\|_{L^2(\Omega)}^2 +  d_v \|\nabla v_\delta^-\|^2_{L^2(\Gamma)} \\ = \int_{\Gamma} \frac{\alpha}{\epsilon}F_\delta(u^+_\delta, v^+_\delta)u^-_\delta - \frac{\beta}{\epsilon}F_\delta(u^+_\delta, v^+_\delta)v^-_\delta dS.
	\end{align*}
	The right hand side is non-negative, which leads to
	\begin{equation*}
		\frac{d}{dt} (\|u_\delta^-\|^2_{L^2(\Omega)} + \|v_\delta^-\|^2_{L^2(\Gamma)}) \leq 0.
	\end{equation*}
	Combine with the fact that $(u_\delta(0),v_\delta(0))$ is non-negative, we have the identity for a.e. $t \in [0,T]$
	\begin{equation*}
		\|u_\delta^-(t)\|^2_{L^2(\Omega)}  = \|v_\delta^-(t)\|^2_{L^2(\Gamma)} = 0.
	\end{equation*}
	So, the system \eqref{aux_system} has non-negative solution, which also is a solution of \eqref{approximation_system}. Due to the uniqueness of solutions to \eqref{approximation_system}, $(u_\delta, v_\delta)$ is non-negative.
\end{proof}
To compelete the proof of Theorem \ref{exist_sol}, we need to pass the limit as $\delta \to 0$ to show that the limit of the sequence $(u_\delta, v_\delta)$ is the solution of \eqref{sys}. We start with some boundedness uniformly in $\delta$. We will introduce a class of non-increasing entropy functions of $L^p$-type.
\begin{bd}\label{lm:ub_approx}
	There exists a positive constant $M$, which is independent of $\delta$, such that 
	\begin{equation*}
		\|u_\delta\|_{L^{\infty}(0,T;L^{\infty}(\Omega))} + \|u_\delta\|_{L^{\infty}(0,T;L^{\infty}(\Gamma))} \le M.
	\end{equation*}
\end{bd}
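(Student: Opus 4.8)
The plan is to construct, for each exponent $p$, a non-increasing \emph{entropy} functional of $L^p$-type built from the two solution components, and then to let $p \to \infty$. The crucial point is that the reversible structure of the reaction term makes a suitably weighted combination of powers of $u_\delta$ and $v_\delta$ dissipative, and that this dissipativity holds for every $\delta$ (and every $\epsilon$) because it uses only the \emph{sign} of $F_\delta$, namely $\mathrm{sign}\,F_\delta(u,v) = \mathrm{sign}(u^\alpha - v^\beta)$, rather than the $\delta$-dependent bound $|F_\delta| \le 1/\delta$. Throughout I would use that $(u_\delta, v_\delta)$ is nonnegative and, for fixed $\delta$, already bounded (from Lemma \ref{lm:exist_approx_sol}), so that all the power test functions below are admissible and the only real issue is to make the resulting bound independent of $\delta$.

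Concretely, for $r > 0$ I would test the weak formulation \eqref{wf_approximation} of the $u_\delta$-equation with $\varphi = \tfrac{1}{\alpha} u_\delta^{\alpha r}$ and of the $v_\delta$-equation with $\psi = \tfrac{1}{\beta} v_\delta^{\beta r}$. Using the chain rule for the time derivative (valid since $u_\delta^{\alpha r} \in L^2(0,T;H^1(\Omega))$ and $\partial_t u_\delta \in L^2(0,T;(H^1(\Omega))^*)$, and similarly on $\Gamma$), the diffusion terms produce the nonnegative quantities $d_u r \int_\Omega u_\delta^{\alpha r - 1}|\nabla u_\delta|^2$ and $d_v r \int_\Gamma v_\delta^{\beta r -1}|\nabla_\Gamma v_\delta|^2$, while the two boundary/reaction contributions combine into
\begin{equation*}
\frac{1}{\epsilon}\int_\Gamma F_\delta(u_\delta,v_\delta)\big((v_\delta^\beta)^r - (u_\delta^\alpha)^r\big)\,dS \le 0,
\end{equation*}
the sign being forced by the monotonicity of $s \mapsto s^r$ together with $\mathrm{sign}\,F_\delta = \mathrm{sign}(u_\delta^\alpha - v_\delta^\beta)$. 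Hence the functional
\begin{equation*}
E_r(t) := \frac{1}{\alpha(\alpha r+1)}\int_\Omega u_\delta^{\alpha r+1}\,dx + \frac{1}{\beta(\beta r+1)}\int_\Gamma v_\delta^{\beta r+1}\,dS
\end{equation*}
is non-increasing, so that $E_r(t) \le E_r(0)$ for a.e. $t$, with a bound depending only on $u_0, v_0, \alpha, \beta$ and $r$, and crucially not on $\delta$ or $\epsilon$.

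From $E_r(t) \le E_r(0)$, discarding the nonnegative $v_\delta$-term on the left, I obtain
\begin{equation*}
\|u_\delta(t)\|_{L^{\alpha r+1}(\Omega)}^{\alpha r+1} \le \|u_0\|_{L^{\alpha r+1}(\Omega)}^{\alpha r+1} + \frac{\alpha(\alpha r+1)}{\beta(\beta r+1)}\|v_0\|_{L^{\beta r+1}(\Gamma)}^{\beta r+1},
\end{equation*}
and symmetrically for $v_\delta$ on $\Gamma$. Taking the $(\alpha r+1)$-th root and sending $r \to \infty$, the polynomial prefactor tends to $1$, $\|u_0\|_{L^{\alpha r+1}(\Omega)} \to \|u_0\|_{L^\infty(\Omega)}$, the ratio $(\alpha r+1)/(\beta r+1)\to\alpha/\beta$, and $\|v_0\|_{L^{\beta r+1}(\Gamma)}^{(\beta r+1)/(\alpha r+1)} \to \|v_0\|_{L^\infty(\Gamma)}^{\beta/\alpha}$, which yields the $\delta$-independent bound $\|u_\delta\|_{L^\infty(0,T;L^\infty(\Omega))} \le \max\{\|u_0\|_{L^\infty(\Omega)}, \|v_0\|_{L^\infty(\Gamma)}^{\beta/\alpha}\}$ and, analogously, $\|v_\delta\|_{L^\infty(0,T;L^\infty(\Gamma))} \le \max\{\|u_0\|_{L^\infty(\Omega)}^{\alpha/\beta}, \|v_0\|_{L^\infty(\Gamma)}\}$; the trace bound for $u_\delta$ on $\Gamma$ then follows from the bulk bound. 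Setting $M$ to be the sum of these constants completes the proof.

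I expect the main obstacle to be the rigorous justification rather than the formal algebra: one must verify that the power functions $u_\delta^{\alpha r}$, $v_\delta^{\beta r}$ are legitimate test functions and that the chain rule $\langle \partial_t u_\delta, u_\delta^{\alpha r}\rangle_\Omega = \frac{d}{dt}\,\frac{1}{\alpha r+1}\int_\Omega u_\delta^{\alpha r+1}$ holds in the weak setting (most cleanly via the boundedness of $u_\delta$ for fixed $\delta$, together with a truncation/approximation argument or a Steklov-averaging in time). A secondary point requiring care is that the two natural exponents $\alpha r +1$ and $\beta r + 1$ differ when $\alpha \ne \beta$, so the passage $r \to \infty$ must track the ratio $(\alpha r+1)/(\beta r+1)$ to extract the correct limiting constants; the monotonicity of $E_r$ itself, however, holds for all $\alpha,\beta \ge 1$ without any such restriction.
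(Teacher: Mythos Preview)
Your proposal is correct and is essentially the same argument as the paper's: you use the same $L^p$-type entropy functional $E_r$ (the paper writes $E_p$ with $p=r$), exploit the sign relation $\mathrm{sign}\,F_\delta=\mathrm{sign}(u_\delta^\alpha-v_\delta^\beta)$ to get $E_r(t)\le E_r(0)$, and then let $r\to\infty$. Your write-up is in fact more careful than the paper's on two points---the admissibility of the power test functions and the explicit limiting constants---and you also fold in the trace bound for $u_\delta$ on $\Gamma$, which the paper states separately in the next lemma.
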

\begin{proof}
	Let $p\in \mathbb N \backslash\{0\}$, we consider the entropy function
	\begin{equation}\label{entropy_func1.1}
		E_p[u_\delta,v_\delta](t) := \frac{1}{p\alpha^2 + \alpha}\int_\Omega u_\delta^{p\alpha + 1} dx + \frac{1}{p\beta^2 + \beta}\int_\Gamma v\delta^{p\beta + 1} dS,
	\end{equation}
	From the system \eqref{approximation_system}, we can show that the time-derivative of \eqref{entropy_func1.1} is non-positive for each $p$. Indeed,
	\begin{align*}
		\frac{d}{dt} E_p(t)
		=& -\frac{d_u}{\alpha}\int_\Omega p u_\delta^{p\alpha - 1}|\nabla u_\delta|^2 dx -\frac{d_v}{\beta}\int_\Gamma p v_\delta^{p\beta - 1}|\nabla_\Gamma v_\delta|^2 dS \\
		&+\frac{1}{\epsilon}\int_\Gamma \frac{(u_\delta^\alpha - v_\delta^\beta)((v_\delta^{\beta})^p - (u_\delta^{\alpha})^p)}{1 + \delta |u_\delta^\alpha - v_\delta^\beta|}dS \leq 0
	\end{align*}
	for all $p \in \mathbb{Z}^+$. Thus, for any $t_0 \in [0,T]$, we have
	\begin{align}\label{ieq1.3.1}
		\frac{1}{p\alpha^2 + \alpha}\int_\Omega u_\delta^{p\alpha + 1}(t_0) dx + &\frac{1}{p\beta^2 + \beta}\int_\Gamma v_\delta^{p\beta + 1}(t_0) dS \leq \nonumber \\ &\frac{1}{p\alpha^2 + \alpha}\int_\Omega u_0^{p\alpha + 1} dx + \frac{1}{p\beta^2 + \beta}\int_\Gamma v_0^{p\beta + 1} dS.
	\end{align}
	Therefore,
	\begin{equation*}
		\frac{1}{2\alpha^2}\int_\Omega u_\delta^{p\alpha + 1}(t_0) dx + \frac{1}{2\beta^2 }\int_\Gamma v_\delta^{p\beta + 1}(t_0) dS \leq\int_\Omega u_0^{p\alpha + 1} dx + \int_\Gamma v_0^{p\beta + 1} dS.
	\end{equation*}
	By taking the root of $p$ on both sides, we obtain
	\begin{equation}\label{ieq1.3.2}
		\left( \frac{1}{2\alpha^2}\int_\Omega u_\delta^{p\alpha + 1}(t_0) dx + \frac{1}{2\beta^2}\int_\Gamma v_\delta^{p\beta + 1}(t_0) dS \right)^{1/p}
		\leq  \left( \int_\Omega u_0^{p\alpha + 1} dx + \int_\Gamma v_0^{p\beta + 1} dS \right)^{1/p}.
	\end{equation}
	Due to the fact that $u_0,v_0$ are bounded, the left hand side is bounded uniformly to $p$ and $\delta$. 
	Letting $p\to \infty$ yields
	\begin{equation*}
		\|u_\delta(t_0)\|_{L^{\infty}(\Omega)} + \|v_\delta(t_0)\|_{L^{\infty}(\Gamma)} \le C
	\end{equation*}
	where $C$ is independent of $\delta>0$.
\end{proof}
From the uniform boundedness of $u_\delta$ on the domain, we can prove that its trace is also bounded.
\begin{bd}\label{lm: trace_esti}
	It holds that $\|u_\delta\|_{L^{\infty}(0,T;L^{\infty}(\Gamma))} \leq M$, for a constant $M$ that does not depend on $\delta$.
\end{bd}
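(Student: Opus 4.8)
The plan is to obtain the trace bound as an immediate consequence of the interior bound $\|u_\delta\|_{L^\infty(0,T;L^\infty(\Omega))}\le M$ established in the preceding lemma, combined with the order-preserving property of the trace operator. Since $u_\delta\in L^2(0,T;H^1(\Omega))$ while $0\le u_\delta\le M$ a.e. on $\Omega\times(0,T)$, for almost every $t\in(0,T)$ the time slice $w:=u_\delta(\cdot,t)$ lies in $H^1(\Omega)$ and satisfies $0\le w\le M$ a.e. in $\Omega$. It therefore suffices to establish the purely elliptic statement that any $w\in H^1(\Omega)$ with $0\le w\le M$ a.e. has a trace satisfying $0\le w|_\Gamma\le M$ a.e. on $\Gamma$, and then to take the essential supremum over $t$.

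The key ingredient is that the trace operator $\mathrm{tr}\colon H^1(\Omega)\to L^2(\Gamma)$ is positivity preserving: if $g\in H^1(\Omega)$ with $g\ge 0$ a.e. in $\Omega$, then $\mathrm{tr}\,g\ge 0$ a.e. on $\Gamma$. This is standard (the boundary $\Gamma$ is smooth) and can be proved by approximating $g$ in $H^1(\Omega)$ by smooth functions $g_n$, using the $H^1$-continuity of the positive part to replace $g_n$ by $g_n^+$ (which still converges to $g^+=g$), and passing to the limit in $L^2(\Gamma)$ in the pointwise identity $\mathrm{tr}(g_n^+)=(\mathrm{tr}\,g_n)^+\ge 0$. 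Granting this, I would apply it twice: to $g=w\ge 0$ to get $w|_\Gamma\ge 0$, and to $g=M-w\ge 0$ to get $M-w|_\Gamma\ge 0$, i.e. $w|_\Gamma\le M$. Hence $0\le w|_\Gamma\le M$ a.e. on $\Gamma$.

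The only genuine point of care is the positivity of the trace, i.e. its commutation with truncation; everything else is routine bookkeeping. Once the slicewise bound $0\le u_\delta(\cdot,t)|_\Gamma\le M$ is known for a.e. $t$, taking the essential supremum in $t$ yields $\|u_\delta\|_{L^\infty(0,T;L^\infty(\Gamma))}\le M$ with the same constant $M$, which in particular is independent of $\delta$. I would emphasize that no new information about $v_\delta$ or about the nonlinearity $F_\delta$ is needed here: the trace estimate is inherited entirely from the bulk $L^\infty$-bound through the $H^1(\Omega)$-regularity of $u_\delta$, so the constant is exactly the one produced by the entropy argument of the previous lemma.
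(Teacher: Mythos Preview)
Your proposal is correct and follows essentially the same approach as the paper: both deduce the boundary bound directly from the interior $L^\infty$-bound of the preceding lemma via the trace estimate $\|u\|_{L^\infty(\Gamma)}\le\|u\|_{L^\infty(\Omega)}$ for $u\in L^\infty(\Omega)\cap H^1(\Omega)$. The only difference is that the paper simply cites this trace inequality as a known result, whereas you supply a self-contained proof of it through the positivity preservation of the trace operator applied to $w$ and $M-w$.
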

\begin{proof}
	This lemma is a direct consequence of the previous lemma and the following estimate (see \cite[Exercise 4.1]{tröltzsch2010optimal})
	\begin{equation*}
		\|u\|_{L^{\infty}(\Gamma)} \leq \|u\|_{L^\infty(\Omega)} \quad \forall u \in L^\infty(\Omega) \cap H^1(\Omega).
	\end{equation*}
\end{proof}
Next, we will show that $(u_\delta,v_\delta)$ is bounded uniformly in $L^2(0,T;H^1(\Omega) \times H^1(\Gamma))$.
\begin{bd}\label{lm:gradient_approx}
	The approximate solution $(u_\delta, v_\delta)$ of \eqref{approximation_system}  is bounded uniformly in $L^2(0,T; H^1(\Omega) \times H^1(\Gamma))$.
\end{bd}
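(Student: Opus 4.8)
The plan is to run the standard $L^2$ energy estimate on \eqref{approximation_system}, now exploiting the uniform $L^\infty$ bounds obtained in the preceding two lemmas to control the reaction contribution on $\Gamma$ uniformly in $\delta$. Write $M$ for the common bound, so that $\|u_\delta\|_{L^\infty(0,T;L^\infty(\Omega))}$, $\|u_\delta\|_{L^\infty(0,T;L^\infty(\Gamma))}$ and $\|v_\delta\|_{L^\infty(0,T;L^\infty(\Gamma))}$ are all at most $M$, independently of $\delta$. The first thing I would record is that these bounds make the truncated reaction rate bounded independently of $\delta$: since
\[
  |F_\delta(u_\delta,v_\delta)| = \frac{|u_\delta^\alpha - v_\delta^\beta|}{1+\delta|u_\delta^\alpha - v_\delta^\beta|} \le u_\delta^\alpha + v_\delta^\beta \le M^\alpha + M^\beta =: C_0 \quad \text{on } \Gamma\times(0,T),
\]
the $1/\delta$ bound used at the Galerkin stage can be replaced by a $\delta$-independent one.

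Next I would test the first equation of \eqref{approximation_system} with $u_\delta$ and the third with $v_\delta$. Since $u_\delta \in L^2(0,T;H^1(\Omega))$ with $\partial_t u_\delta \in L^2(0,T;H^1(\Omega)^*)$, and analogously for $v_\delta$ on $\Gamma$, the usual chain rule $\tfrac{d}{dt}\|u_\delta\|_{L^2(\Omega)}^2 = 2\langle \partial_t u_\delta, u_\delta\rangle_\Omega$ applies and the procedure is rigorous. Using the boundary condition $\partial_{\mathbf n} u_\delta = -\tfrac{\alpha}{\epsilon}F_\delta(u_\delta,v_\delta)$ after integration by parts and adding the two identities, I obtain
\[
  \frac12\frac{d}{dt}\Big(\|u_\delta\|_{L^2(\Omega)}^2 + \|v_\delta\|_{L^2(\Gamma)}^2\Big) + d_u\|\nabla u_\delta\|_{L^2(\Omega)}^2 + d_v\|\nabla_\Gamma v_\delta\|_{L^2(\Gamma)}^2 = \frac{1}{\epsilon}\int_\Gamma F_\delta(u_\delta,v_\delta)\big(\beta v_\delta - \alpha u_\delta\big)\,dS.
\]

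The step I expect to be the only genuine obstacle is that this right-hand side carries no favorable sign when $\alpha\neq\beta$: for $\alpha=\beta$ the integrand is non-positive by monotonicity of $x\mapsto x^\alpha$ and could simply be discarded, but in general it cannot. Here the uniform $L^\infty$ bounds are exactly what is needed. Combining the bound on $F_\delta$ with the trace bounds on $u_\delta$ and $v_\delta$ gives
\[
  \left|\frac{1}{\epsilon}\int_\Gamma F_\delta(u_\delta,v_\delta)\big(\beta v_\delta - \alpha u_\delta\big)\,dS\right| \le \frac{C_0}{\epsilon}\,(\alpha+\beta)\,M\,|\Gamma| =: C,
\]
a constant depending on $\epsilon$, $M$ and $|\Gamma|$ but not on $\delta$ or $t$ (recall that $\epsilon$ is fixed throughout the passage $\delta\to0$).

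Finally I would integrate the energy identity over $(0,T)$ and drop the non-negative terminal term, so that
\[
  \int_0^T\Big(d_u\|\nabla u_\delta\|_{L^2(\Omega)}^2 + d_v\|\nabla_\Gamma v_\delta\|_{L^2(\Gamma)}^2\Big)\,dt \le \tfrac12\big(\|u_0\|_{L^2(\Omega)}^2 + \|v_0\|_{L^2(\Gamma)}^2\big) + CT.
\]
Together with the uniform $L^\infty$ bounds, which in particular control $\|u_\delta\|_{L^\infty(0,T;L^2(\Omega))}$ and $\|v_\delta\|_{L^\infty(0,T;L^2(\Gamma))}$ since $\Omega$ and $\Gamma$ have finite measure, this produces a bound on $(u_\delta,v_\delta)$ in $L^2(0,T;H^1(\Omega)\times H^1(\Gamma))$ independent of $\delta$, which is the claim.
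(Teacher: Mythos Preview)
Your argument is correct, but it takes a different route from the paper. The paper employs the logarithmic entropy
\[
  E_\lambda(t)=\int_\Omega (u_\delta+\lambda)\big(\log(u_\delta+\lambda)-1\big)\,dx+\int_\Gamma (v_\delta+\lambda)\big(\log(v_\delta+\lambda)-1\big)\,dS,
\]
whose dissipation contains the Fisher-type terms $\int_\Omega|\nabla u_\delta|^2/(u_\delta+\lambda)$, $\int_\Gamma|\nabla_\Gamma v_\delta|^2/(v_\delta+\lambda)$ together with the reaction contribution $\frac{1}{\epsilon}\int_\Gamma F_\delta(u_\delta,v_\delta)\log\frac{(u_\delta+\lambda)^\alpha}{(v_\delta+\lambda)^\beta}$. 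After sending $\lambda\to0$ via Fatou's lemma the latter has the \emph{good sign} thanks to $(a-b)\log(a/b)\ge0$, so it can simply be dropped; the $L^\infty$ bound then converts the Fisher terms into $L^2$-gradient bounds. Your approach is more elementary: you run the plain $L^2$ energy identity and bound the reaction term \emph{in absolute value} using the already-established uniform $L^\infty$ bounds, accepting that the resulting constant carries a factor $1/\epsilon$. Since the present lemma only asks for uniformity in $\delta$ with $\epsilon$ fixed, this is perfectly adequate and avoids the $\lambda$-regularization and the monotone/Fatou passages altogether. The price is that your estimate does not transfer to Lemma~\ref{lm:gradient_bound}, where uniformity in $\epsilon$ is essential; the paper's entropy argument, by contrast, yields an $\epsilon$-independent bound here already and is then recycled verbatim for that later lemma.
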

\begin{proof}
	We consider the logarithm entropy function (see \cite{fellner2018well}):
	\begin{equation}\label{log_entropy1.1}
		E[u_\delta,v_\delta](t) = \int_\Omega u_\delta(\log u_\delta - 1)dx + \int_\Gamma v_\delta (\log v_\delta - 1)dS,
	\end{equation}
	where $\log(x)$ is the natural logarithm of the positive number $x$. Formally, the dissipation of entropy function is calculated $D(t) = \frac{-d}{dt} E(t) = \langle -\partial_t u_\delta, \log(u_\delta) \rangle_\Omega + \langle -\partial_t v_\delta, \log(v_\delta) \rangle_\Gamma$ and then choose $\log(u_\delta)$ (and $\log(v_\delta)$, respectively) as the test functions. However, $\log(u_\delta)$ and $\log(v_\delta)$ do not belong to $L^2(\Omega)$ and $L^2(\Gamma)$ since we do not have yet positive lower bounds of $u_\delta$ and $v_\delta$, respectively. To overcome this, we replace $u_\delta$ (and $v_\delta$) by $u_\delta + \lambda$ (and $v_\delta + \lambda$), with $\lambda$ is a small positive parameter and taking the limit $\lambda \to 0^+$. We introduce the modified entropy function
	\begin{equation}\label{log_entropy1.2}
		E_\lambda[u_\delta,v_\delta](t) = \int_\Omega (u_\delta + \lambda)(\log (u_\delta+\lambda) - 1)dx \nonumber + \int_\Gamma (v_\delta + \lambda)(\log (v_\delta + \lambda) - 1)dS.
	\end{equation}
	We calculate the time-derivative entropy function along trajectories
	\begin{align*}
		D_\lambda(t) &= - \frac{d}{dt}E_\lambda(t)\\
		&= d_u\int_\Omega \nabla u_\delta  \nabla (\log(u_\delta + \lambda))dx + \frac{\alpha}{\delta}\int_\Gamma \frac{u_\delta^\alpha - v_\delta^\beta}{1 + \delta |u^\alpha_\delta - v^\beta_\delta|}\log(u_\delta + \lambda)dS \\
		&\quad +d_v\int_\Gamma \nabla_\Gamma v_\delta  \nabla_\Gamma (\log(v_\delta + \lambda))dS - \frac{\beta}{\delta}\int_\Gamma \frac{u_\delta^\alpha - v_\delta^\beta}{1 + \delta |u^\alpha_\delta - v^\beta_\delta|}\log(v_\delta + \lambda)dS \\
		&= \, d_u\int_\Omega \frac{|\nabla u_\delta|^2 }{u_\delta + \lambda}dx + d_v\int_\Gamma \frac{|\nabla_\Gamma v_\delta|^2}{v_\delta + \lambda}dS + \frac{1}{\delta}\int_\Gamma \frac{u_\delta ^\alpha - v_\delta ^\beta}{1+\delta|u^\alpha_\delta - v^\beta_\delta|}\log\frac{(u_\delta + \lambda)^\alpha}{(v_\delta + \lambda)^\beta}dS .
	\end{align*}
	Then, by integrating $D_\lambda(t)$ from $0$ to $T$, we get
	\begin{align*}
		E_\lambda(0)-E_\lambda(T)&=  \int_0^T D_\lambda(t)dt\\
		&= \int_0^T \left( d_u\int_\Omega \frac{|\nabla u_\delta|^2 }{u_\delta + \lambda}dx + d_v\int_\Gamma \frac{|\nabla_\Gamma v_\delta|^2}{v_\delta + \lambda}dS \right)dt
		\\ &+ \frac{1}{\delta}\int_0^T  \int_\Gamma \frac{u_\delta ^\alpha - v_\delta ^\beta}{1+\delta|u_\delta^\alpha - v_\delta^\beta|}\log\frac{(u_\delta + \lambda)^\alpha}{(v_\delta + \lambda)^\beta}dS dt.
	\end{align*}
	We can rewrite the above equation to:
	\begin{align*}
		\int_0^T \left( d_u\int_\Omega \frac{|\nabla u_\delta|^2 }{u_\delta + \lambda}dx + d_v\int_\Gamma \frac{|\nabla_\Gamma v_\delta|^2}{v_\delta + \lambda}dS \right)&dt  = E_\lambda(0) - E_\lambda(T) \\ + \frac{1}{\epsilon}\int_0 ^T \int_\Gamma &\frac{v_\delta ^\beta - u_\delta^\alpha}{1+\delta|u_\delta^\alpha - v^\beta_\delta|}\log\frac{(u_\delta + \lambda)^\alpha}{(v_\delta + \lambda)^\beta}dS
	\end{align*}
	Since the initial condition $(u_0,v_0) \in L^{\infty}(\Omega) \times L^\infty(\Gamma)$, the term $E_\lambda(0)$ can be bounded uniformly (by choosing $\lambda < 1$). On the other hand, $E_\lambda (T)$ has a lower bound without depending on $\lambda$, thanks to the boundedness of $(u_\delta,v_\delta)$. Thus, we have
	\begin{align}\label{est1.4.1}
		d_u\int_0^T \int_\Omega \frac{|\nabla u_\delta|^2 }{u_\delta + \lambda}dxdt +& d_v\int_0^T\int_\Gamma \frac{|\nabla_\Gamma v_\delta|^2}{v_\delta + \lambda}dSdt \nonumber \\ &\leq C + \frac{1}{\epsilon}\int_0 ^T \int_\Gamma \frac{v_\delta ^\beta - u_\delta^\alpha}{1+\delta|u^\alpha_\delta - v^\beta_\delta|}\log\frac{(u_\delta + \lambda)^\alpha}{(v_\delta + \lambda)^\beta}dS,
	\end{align}
	with $C$ is a constant that does not depend on $\lambda$ and also $\delta$. By using the Monotone Convergence Theorem,
	\begin{align*}
		\lim_{\lambda \to 0^+} \left( d_u\int_0^T \int_\Omega \frac{|\nabla u_\delta|^2 }{u_\delta + \lambda}dx + d_v\int_0^T\int_\Gamma \frac{|\nabla_\Gamma v_\delta|^2}{v_\delta + \lambda}dS \,dt \right)&   \\ = d_u\int_0^T \int_\Omega \frac{|\nabla u_\delta|^2 }{u_\delta}dxdt + d_v& \int_0^T\int_\Gamma \frac{|\nabla_\Gamma v_\delta|^2}{v_\delta}dSdt .
	\end{align*}
	For the logarithmic term, we rewrite it as
	\begin{align*}
		&\int_0^T\int_\Gamma \frac{v_\delta ^\beta - u_\delta^\alpha}{1+\delta|u^\alpha_\delta - v^\beta_\delta|}\log\frac{(u_\delta + \lambda)^\alpha}{(v_\delta + \lambda)^\beta}dSdt\\
		&= \int_0^T\int_\Gamma \frac{(v_\delta+\lambda)^\beta - (u_\delta+\lambda)^\alpha}{1+\delta|u^\alpha_\delta - v^\beta_\delta|}\log\frac{(u_\delta + \lambda)^\alpha}{(v_\delta + \lambda)^\beta}dSdt\\
		&\quad + \int_0^T\int_{\Gamma} \frac{(u_\delta+\lambda)^\alpha - u_\delta^\alpha - ((v_\delta+\lambda)^\beta - v_\delta^\beta)}{1+\delta|u_\delta^\alpha - v_\delta^\beta|}\log\frac{(u_\delta+\lambda)^\alpha}{(v_\delta+\lambda)^\beta}dSdt\\
		&=: (I_\lambda) + (II_\lambda).
	\end{align*}
	The convergence
	\begin{equation*}
		\limsup_{\lambda\to 0}(I_\lambda) \le -\int_0^T\int_{\Gamma}(u_\delta^\alpha - v_\delta^\beta)\log\frac{u_\delta^\alpha}{v_\delta^\beta}dSdt
	\end{equation*}
	follows from Fatou's lemma and the elementary inequality $-(u^\alpha - v^\beta)\log(u^\alpha/v^\beta) \le 0$. For $(II_\lambda)$, we use the $L^\infty$-boundedness of $u_\delta, v_\delta$, the estimate $|(u_\delta+\lambda)^\alpha - u_\delta^\alpha| + |(v_\delta+\lambda)^\beta - v_\delta^\beta| \le C\lambda$ and the Dominated Convergence Theorem to get
	\begin{equation*}
		\limsup_{\lambda \to 0}|(II_\lambda)| = 0.
	\end{equation*}
	Combining these estimates and take the limit $\lambda \to 0^+$ in \eqref{est1.4.1}, we obtain the uniform estimation
	\begin{equation*}
		d_u\int_0^T\int_\Omega \frac{|\nabla u_\delta|^2 }{u_\delta}dxdt +
		d_v\int_0^T\int_\Gamma \frac{|\nabla_\Gamma v_\delta|^2}{v_\delta}dSdt\leq C,
	\end{equation*}
	where $C$ is a constant that does not depend on $\lambda$. Finally, from the uniform boundedness of $(u_\delta,v_\delta)$ in $L^\infty(0,T;L^\infty(\Omega) \times L^\infty(\Gamma))$, we can show that the gradient is also bounded uniformly
	\begin{align*}
		\|\nabla u_\delta\|^2_{L^2(0,T;L^2(\Omega))} &= \int_0^T \int_\Omega |\nabla u_\delta|^2 dxdt \\&= \int_0^T \int_\Omega \frac{|\nabla u_\delta|^2}{u_\delta}u_\delta dxdt \\ &\leq  \int_0^T \|u_\delta(t)\|_{L^\infty(\Omega)}\int_\Omega \frac{|\nabla u_\delta|^2}{u_\delta } dxdt \\ &\leq \|u_\delta\|_{L^\infty(0,T; L^\infty(\Omega))} \int_0^T \int_\Omega \frac{|\nabla u_\delta|^2}{u_\delta} dxdt \\ 
		&\leq M
	\end{align*}
	with a constant $M$ is independent to $\delta$. By a similar argument, we obtain the conclusion for the gradient of $v_\delta$
	\[
	\|\nabla_\Gamma v_\delta\|^2_{L^2(0,T;L^2(\Gamma)} \leq M,
	\]
	which ends the proof.
\end{proof}
Finally, we need show that the time-derivative of $(u_\delta,v_\delta)$ is also bounded in a suitable functional space. 
\begin{bd}\label{lm:time_deri_approx}
	It holds that $\partial_t u_\delta$ and $\partial_tv_\delta$ are bounded in the space $L^2(0,T;H^{1}(\Omega)^*)$ and $L^2(0,T;H^{1}(\Gamma)^*)$, respectively, uniformly in $\delta>0$.
\end{bd}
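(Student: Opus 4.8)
The plan is to read off the time derivatives directly from the weak formulation \eqref{wf_approximation} and to bound the resulting duality pairings by the norm of the test function, with a constant independent of $\delta$. Concretely, from the first equation of \eqref{wf_approximation} one has, for every $\varphi \in L^2(0,T;H^1(\Omega))$,
\begin{equation*}
\int_0^T \langle \partial_t u_\delta, \varphi\rangle_\Omega\,dt = -d_u\int_0^T\int_\Omega \nabla u_\delta\cdot\nabla\varphi\,dx\,dt - \frac{\alpha}{\epsilon}\int_0^T\int_\Gamma F_\delta(u_\delta,v_\delta)\varphi\,dS\,dt,
\end{equation*}
so it suffices to dominate the right-hand side by $C\|\varphi\|_{L^2(0,T;H^1(\Omega))}$ uniformly in $\delta$. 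The diffusion term is the routine part: by Cauchy--Schwarz and the uniform bound $\|\nabla u_\delta\|_{L^2(0,T;L^2(\Omega))}\le C$ from Lemma \ref{lm:gradient_approx}, it is controlled by $d_u\|\nabla u_\delta\|_{L^2(0,T;L^2(\Omega))}\|\nabla\varphi\|_{L^2(0,T;L^2(\Omega))}\le C\|\varphi\|_{L^2(0,T;H^1(\Omega))}$.

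The crucial step is the reaction term, and here the key observation is that one must \emph{not} use the crude bound $|F_\delta|\le 1/\delta$ employed in Lemma \ref{lm:exist_approx_sol}, since it blows up as $\delta\to 0$. Instead, I would invoke the uniform $L^\infty$ estimates established in the previous lemmas: since $0\le u_\delta, v_\delta\le M$ on $\Gamma\times(0,T)$ with $M$ independent of $\delta$, and $\alpha,\beta\ge 1$, the definition of $F_\delta$ gives
\begin{equation*}
|F_\delta(u_\delta,v_\delta)| = \frac{|u_\delta^\alpha - v_\delta^\beta|}{1+\delta|u_\delta^\alpha - v_\delta^\beta|} \le |u_\delta^\alpha - v_\delta^\beta| \le M^\alpha + M^\beta,
\end{equation*}
so $F_\delta(u_\delta,v_\delta)$ is bounded in $L^\infty(\Gamma\times(0,T))$, hence in $L^2(0,T;L^2(\Gamma))$, uniformly in $\delta$. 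Combining this with the trace inequality $\|\varphi\|_{L^2(\Gamma)}\le C\|\varphi\|_{H^1(\Omega)}$ (the modified trace theorem used earlier) bounds the reaction term by $C\|\varphi\|_{L^2(0,T;H^1(\Omega))}$ as well.

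Putting the two estimates together yields the claimed uniform bound on $\partial_t u_\delta$ in $L^2(0,T;H^1(\Omega)^*)$. The argument for $\partial_t v_\delta$ is entirely analogous, starting from the second equation of \eqref{wf_approximation}: the Laplace--Beltrami contribution is handled by Cauchy--Schwarz together with Lemma \ref{lm:gradient_approx}, while the reaction term, which now lives intrinsically on $\Gamma$, is controlled directly by the same uniform $L^\infty$ bound on $F_\delta(u_\delta,v_\delta)$ tested against $\psi\in L^2(0,T;H^1(\Gamma))$ — no trace theorem is even needed in this case. The main obstacle is thus conceptual rather than computational: one must recognize that uniform-in-$\delta$ control of the nonlinearity has to be extracted from the $L^\infty$ bounds on the solutions rather than from the regularization parameter, which is exactly what makes the bound survive the limit $\delta\to 0$.
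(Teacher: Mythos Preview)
Your proposal is correct and follows essentially the same approach as the paper: both proofs read off $\partial_t u_\delta$ from the weak formulation, bound the gradient term via Cauchy--Schwarz and Lemma~\ref{lm:gradient_approx}, and control the reaction term through the pointwise estimate $|F_\delta(u_\delta,v_\delta)|\le |u_\delta^\alpha-v_\delta^\beta|$ together with the uniform $L^\infty$ bounds on $(u_\delta,v_\delta)$, then invoke the trace inequality. Your write-up is in fact slightly more explicit than the paper's about why the $1/\delta$ bound must be avoided, but the mathematical content is the same.
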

\begin{proof}
	Consider the weak formulation for $u_\delta$, it holds for all $\varphi \in L^2(0,T;H^1(\Omega))$
	\begin{equation}\label{wfu1_approx}
		\int_0^T\langle \partial_t u_\delta, \varphi \rangle_\Omega +\int_\Omega  d_u \nabla u_\delta \nabla\varphi dxdt = - \frac{\alpha}{\epsilon} \int_0^T\int_\Gamma F_\delta(u_\delta,v_\delta)\varphi dSdt. 
	\end{equation}
	The right hand side can be estimated by using the fact that $|F(u_\delta,v_\delta)| \leq |u_\delta^\alpha - v_\delta^\beta|$ and $\{u_\delta\}, \{v_\delta\}$ are uniformly bounded in $L^\infty(0,T;L^\infty(\Gamma))$
	\begin{equation*}
		\frac{\alpha}{\epsilon} \int_0^T\int_\Gamma F_\delta(u_\delta,v_\delta)\varphi dSdt \leq C\|\varphi\|_{L^2(0,T;L^2(\Gamma))} \leq C\|\varphi\|_{L^2(0,T;H^1(\Omega))}.
	\end{equation*}
	Then, combining this with Lemma \ref{lm:gradient_approx}, we have the estimation
	\begin{equation*}
		\int_0^T \langle \partial_t u_\delta, \varphi\rangle_\Omega  dt\leq C\|\varphi\|_{L^2(0,T;H^1(\Omega))}
	\end{equation*}
	for all $\varphi \in L^2(0,T;H^1(\Omega))$, which gives $\{\partial_t u_\delta\}$ is bounded uniformly in $L^2(0,T;H^{1}(\Omega)^*)$. Using similar argument, we also have the boundedness for $\{\partial_t v_\delta\}$.
\end{proof}

Now, we can pass the limit of the sequence $\{u_\delta,v_\delta\}$ as $\delta \to 0$, to obtain the solution of \eqref{sys}.
\begin{proof}[Proof of the Theorem \ref{exist_sol}]
	From the boundedness in Lemmas \ref{lm:gradient_approx} and \ref{lm:time_deri_approx}, we can appply Aubin-Lions lemma to get the strong convergence as $\delta \to 0$, up to a subsequece,
	\begin{equation*}
		u_{\delta} \to u \text{ in } L^2(0,T;L^2(\Omega)) \quad \text{ and } \quad v_{\delta} \to u \text{ in } L^2(0,T;L^2(\Gamma))
	\end{equation*}
	for some non-negative functions $u\in L^2(0,T;H^1(\Omega))\cap L^{\infty}(0,T;L^{\infty}(\Omega))$ and $v\in L^2(0,T;H^1(\Gamma))\cap L^{\infty}(0,T;L^{\infty}(\Gamma))$. Thanks to the uniform-in-$\delta$ boundedness of $u_\delta$ and $v_\delta$, we also obtain $F_\delta(u,v) \to u^\alpha-v^\beta$ as $\delta \to 0$ in $L^2(0,T;L^2(\Omega)\times L^2(\Gamma))$. Therefore, we can pass to the limit as $\delta \to 0$ in the weak formulation of \eqref{approximation_system} to get that $(u,v)$ is a weak bounded solution to \eqref{sys}. Furthermore, thanks to the boundedness of $u,v$ and the local Lipschitz continuity of the nonlinearities, the uniqueness follows.
\end{proof}
\subsection{Convergence of the sequence of solution}
From now on, we use the subscript and denote by $(\ue,\ve)$ the solution to \eqref{sys} to highlight the dependence on $\epsilon>0$.
The main result of this part is the following theorem.
\begin{dl}\label{thm:converge}
	Let $(\ue,\ve)$ be the weak solution of system \eqref{sys} with parameter $\epsilon = 1/k$ and non-negative initial value $(u_0, v_0)$ $\in L^\infty(\Omega) \times L^\infty(\Gamma)$. Then, as $\epsilon \to 0$, the sequence $\{(\ue,\ve)\}$ has a subsequence that converges strongly in $L^2(0,T;$ $L^2(\Omega) \times L^2(\Gamma))$ to $(w,z)$, where $z = (w|_\Gamma)^{\alpha/\beta}$ and $w$ is a weak solution to \eqref{limit_system}.
\end{dl}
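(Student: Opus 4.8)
\section*{Proof proposal}

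The plan is to obtain uniform-in-$\epsilon$ a-priori bounds, extract a convergent subsequence by an Aubin--Lions argument carried out in a product space, and then pass to the limit in a suitably weighted weak formulation. First I would observe that all entropy estimates of Subsection 2.1 apply to the genuine solutions $(\ue,\ve)$ with constants \emph{independent of} $\epsilon$, since in each dissipation the reaction contributes a non-positive term irrespective of $\epsilon$: for the $L^p$-type functional the term $\tfrac1\epsilon\int_\Gamma(\ue^\alpha-\ve^\beta)((\ve^\beta)^p-(\ue^\alpha)^p)\le0$, and for the logarithmic functional the term $\tfrac1\epsilon\int_0^T\!\int_\Gamma(\ve^\beta-\ue^\alpha)\log\tfrac{\ue^\alpha}{\ve^\beta}\le0$. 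This yields $\|\ue\|_{L^\infty(0,T;L^\infty(\Omega))}+\|\ve\|_{L^\infty(0,T;L^\infty(\Gamma))}\le M$ and boundedness of $(\ue,\ve)$ in $L^2(0,T;H^1(\Omega)\times H^1(\Gamma))$, uniformly in $\epsilon$, exactly as in Lemma \ref{lm:gradient_approx}. Crucially, the logarithmic dissipation also gives $\tfrac1\epsilon\int_0^T\!\int_\Gamma(\ue^\alpha-\ve^\beta)\log\tfrac{\ue^\alpha}{\ve^\beta}\,dSdt\le C$; since the integrand is non-negative and $(a-b)\log(a/b)\ge \tilde M^{-1}(a-b)^2$ for $a,b\in[0,\tilde M]$ with $\tilde M=\max\{M^\alpha,M^\beta\}$ (by the mean value theorem), this forces $\|\ue^\alpha-\ve^\beta\|_{L^2(0,T;L^2(\Gamma))}^2\le C\epsilon\to0$, so $\ue^\alpha-\ve^\beta\to0$ strongly on $\Gamma\times(0,T)$.

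The individual time derivatives $\partial_t\ue,\partial_t\ve$ are \emph{not} bounded uniformly in $\epsilon$ because of the $\tfrac1\epsilon$ factor in the reaction/flux terms, so I would work with the pair in $Z^*$. Testing the bulk equation with $\varphi\in H^1(\Omega)$ and the surface equation with its trace $\varphi|_\Gamma$, then forming $\beta\times(\text{bulk})+\alpha\times(\text{surface})$, the reaction contributions $-\tfrac{\alpha\beta}{\epsilon}\int_\Gamma(\ue^\alpha-\ve^\beta)\varphi|_\Gamma$ cancel exactly, leaving
\[
\beta\langle\partial_t\ue,\varphi\rangle_\Omega+\alpha\langle\partial_t\ve,\varphi|_\Gamma\rangle_\Gamma=-\beta d_u\int_\Omega\nabla\ue\nabla\varphi\,dx-\alpha d_v\int_\Gamma\nabla_\Gamma\ve\nabla_\Gamma\varphi|_\Gamma\,dS.
\]
The right-hand side is bounded by $C\bra{\|\nabla\ue\|_{L^2(\Omega)}+\|\nabla_\Gamma\ve\|_{L^2(\Gamma)}}\|(\varphi,\varphi|_\Gamma)\|_Z$, so $\partial_t(\beta\ue,\alpha\ve)$ is bounded in $L^2(0,T;Z^*)$ uniformly in $\epsilon$. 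With the triple $H^1(\Omega)\times H^1(\Gamma)\hookrightarrow\hookrightarrow L^2(\Omega)\times L^2(\Gamma)\hookrightarrow Z^*$ (first embedding compact by Rellich on $\Omega$ and on the compact manifold $\Gamma$, second continuous because $Z\hookrightarrow L^2(\Omega)\times L^2(\Gamma)$), the Aubin--Lions lemma gives, along a subsequence, $(\ue,\ve)\to(w,z)$ strongly in $L^2(0,T;L^2(\Omega)\times L^2(\Gamma))$; the $H^1$ bounds additionally give $\ue\rightharpoonup w$ in $L^2(0,T;H^1(\Omega))$ and $\ve\rightharpoonup z$ in $L^2(0,T;H^1(\Gamma))$.

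To identify the constraint $z=(w|_\Gamma)^{\alpha/\beta}$ I need strong convergence of the traces on $\Gamma$, which does not follow from bulk $L^2$-convergence alone. Here I would apply the modified trace inequality $\|u\|_{L^2(\Gamma)}\le\kappa\|\nabla u\|_{L^2(\Omega)}+C_\kappa\|u\|_{L^2(\Omega)}$ to $\ue-w$: integrating in time, the $C_\kappa$-term tends to $0$ by strong bulk convergence while the $\kappa$-term stays uniformly bounded, so letting $\epsilon\to0$ and then $\kappa\to0$ yields $\ue|_\Gamma\to w|_\Gamma$ in $L^2(0,T;L^2(\Gamma))$. Combined with the $L^\infty$ bounds and the Lipschitz continuity of $s\mapsto s^\alpha$ on bounded sets, $\ue^\alpha|_\Gamma\to(w|_\Gamma)^\alpha$ and $\ve^\beta\to z^\beta$ in $L^2(0,T;L^2(\Gamma))$; together with $\ue^\alpha-\ve^\beta\to0$ this gives $z^\beta=(w|_\Gamma)^\alpha$, i.e. $z=(w|_\Gamma)^{\alpha/\beta}$.

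Finally I would pass to the limit in the weighted weak formulation. After integrating the combined identity by parts in time against $\varphi$ with $\varphi(T)\equiv0$, weak $H^1$-convergence handles the gradient terms, strong $L^2$- and trace-convergence handle the $\ue\partial_t\varphi$ and $\ve\partial_t(\varphi|_\Gamma)$ terms, and substituting $z=(w|_\Gamma)^{\alpha/\beta}$ produces precisely the weak form of \eqref{limit_system}, with the initial data $w(0)=u_0$ in $\Omega$ and $z(0)=v_0$ on $\Gamma$ (equivalently $w|_\Gamma(0)=v_0^{\beta/\alpha}$) read off from the $t=0$ boundary terms. I expect the main obstacle to be this compactness step: isolating the correct dual space $Z^*$ and exploiting the $(\beta,\alpha)$-weighting that cancels the singular $\tfrac1\epsilon$ reaction term so that $\partial_t(\beta\ue,\alpha\ve)$ is bounded in $L^2(0,T;Z^*)$, together with the trace-convergence argument, is exactly what replaces the ``sum up the equations'' device of \cite{bothe2003reaction} that fails under the bulk--surface coupling.
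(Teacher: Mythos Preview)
Your proposal is correct and follows essentially the same route as the paper: uniform $L^\infty$ and $H^1$ bounds from the $L^p$-type and logarithmic entropies, the $(\beta,\alpha)$-weighted combination to cancel the singular reaction and bound $\partial_t(\beta\ue,\alpha\ve)$ in $L^2(0,T;Z^*)$, Aubin--Lions in the product triple, and passage to the limit in the combined weak formulation. The only cosmetic differences are that the paper reads off $\tfrac1\epsilon\|\ue^\alpha-\ve^\beta\|_{L^2}^2\le C$ directly from the $p=1$ entropy dissipation (your log-entropy plus mean-value argument works equally well), and for the constraint $z^\beta=(w|_\Gamma)^\alpha$ the paper uses only \emph{weak} trace convergence of $\ue^\alpha$ in $L^2(0,T;L^2(\Gamma))$ together with the strong convergence $\ue^\alpha-\ve^\beta\to0$, which is slightly quicker than your strong-trace argument via the modified trace inequality.
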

The proof of this theorem uses some intermediate lemmas, which we show in the subsequent part.
\begin{bd}\label{lm:ub}
	Let $(\ue, \ve)$ be the weak solution of the system \eqref{sys}, correspond to parameter $\epsilon$ and the non-negative initial condition $(u_0, v_0) \in L^\infty(\Omega) \times L^\infty(\Gamma)$. We have the upper bound
	\begin{equation*}
		\|\ue\|_{L^\infty(0,T;L^\infty(\Omega))} + \|\ve\|_{L^\infty(0,T;L^\infty(\Gamma))} \leq M,
	\end{equation*}
	where $M$ is a constant that does not depend on $\epsilon$.
\end{bd}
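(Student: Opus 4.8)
The plan is to re-run the $L^p$-entropy argument already used to bound the approximate solutions, the essential observation being that the very same computation yields a bound which is uniform not only in the regularization parameter $\delta$ but also in the reaction rate $\epsilon$. Concretely, for $p \in \mathbb{N}$ I consider the functional
\[
E_p[\ue,\ve](t) = \frac{1}{p\alpha^2 + \alpha}\int_\Omega \ue^{p\alpha+1}\,dx + \frac{1}{p\beta^2 + \beta}\int_\Gamma \ve^{p\beta+1}\,dS,
\]
and test the first and third equations of \eqref{sys} against $\ue^{p\alpha}$ and $\ve^{p\beta}$, respectively, then differentiate in time.

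The diffusion terms yield nonpositive contributions $-\frac{d_u}{\alpha}\int_\Omega p\,\ue^{p\alpha-1}|\nabla\ue|^2\,dx$ and its surface analogue, while the coupled boundary and reaction terms combine into
\[
\frac{1}{\epsilon}\int_\Gamma (\ue^\alpha - \ve^\beta)\big((\ve^\beta)^p - (\ue^\alpha)^p\big)\,dS \le 0,
\]
which is nonpositive because $s \mapsto s^p$ is increasing, so the two factors always carry opposite signs. The key point is that this term carries the prefactor $1/\epsilon$ but is nonpositive, so however large $1/\epsilon$ may be, we still obtain $\frac{d}{dt}E_p(t) \le 0$, hence $E_p(t) \le E_p(0)$ for all $t$, with the right-hand side depending only on the initial data. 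Taking $p$-th roots and letting $p \to \infty$, exactly as in the preceding lemma, converts this into
\[
\|\ue(t)\|_{L^\infty(\Omega)} + \|\ve(t)\|_{L^\infty(\Gamma)} \le M,
\]
with $M$ controlled by $\|u_0\|_{L^\infty(\Omega)}$ and $\|v_0\|_{L^\infty(\Gamma)}$ alone, and in particular independent of $\epsilon$.

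The one technical point to address is the rigor of the entropy computation at the level of the weak solution of \eqref{sys}, since $\ue^{p\alpha}$ need not be an admissible test function a priori. I would handle this by working first at the level of the approximate system \eqref{approximation_system}, where the same inequality $E_p(t)\le E_p(0)$ holds (the regularized factor $1/(1+\delta|\cdot|)$ does not affect the sign of the reaction term), obtaining bounds on $(u_\delta,v_\delta)$ that are uniform in \emph{both} $\delta$ and $\epsilon$; I then pass to the limit $\delta \to 0$ using the strong convergence $u_\delta \to \ue$ and $v_\delta \to \ve$ in $L^2$ together with a.e.\ convergence along a subsequence, and the lower semicontinuity of the $L^\infty$ norm under a.e.\ convergence. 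Since the bound for the approximate solutions already does not depend on $\epsilon$, the limit inherits the same $\epsilon$-independent constant. The main obstacle is thus merely the bookkeeping of uniformity in $\epsilon$, namely the observation that $\epsilon$ enters the dissipation estimate only through the nonpositive reaction term and therefore drops out of the final bound; the remainder is a verbatim repetition of the earlier $L^p \to L^\infty$ passage.
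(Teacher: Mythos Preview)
Your proposal is correct and follows essentially the same approach as the paper: the same $L^p$-type entropy functional $E_p$, the same sign observation on the reaction term (so that the $1/\epsilon$ prefactor is harmless), and the same passage $p\to\infty$. The paper's proof is in fact only a sketch that points back to the corresponding estimate for the approximate solutions and remarks that the constant there is already independent of $\epsilon$; your additional paragraph on justifying the computation rigorously via the approximate system and passing $\delta\to 0$ is a welcome elaboration but not a departure from the paper's strategy.
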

\begin{proof}
	The proof is similar to the Lemma \ref{lm:ub_approx}, with the remark that the constant $M$ in therein is also independent of $\epsilon>0$. Therefore, we only give a sketch here. By considering energy functions, for $p\in \mathbb N \backslash\{0\}$,
	\begin{equation}\label{entropy_func1}
		E_p[\ue,\ve](t) := \frac{1}{p\alpha^2 + \alpha}\int_\Omega \ue^{p\alpha + 1} dx + \frac{1}{p\beta^2 + \beta}\int_\Gamma \ve^{p\beta + 1} dS,
	\end{equation}
	we can use the weak formulation of $(\ue,\ve)$ to obtain, a.e. $t \in [0,T]$,
	\begin{equation}\label{est1:H_t<0}
		E_p(t) \leq E_p(0).
	\end{equation}
	This implies 
	\begin{align}\label{ieq2.2.1}
		\frac{1}{p\alpha^2 + \alpha}\int_\Omega \ue^{p\alpha + 1}(t) dx + &\frac{1}{p\beta^2 + \beta}\int_\Gamma \ve^{p\beta + 1}(t) dS \nonumber \\ &\leq \frac{1}{p\alpha^2 + \alpha}\int_\Omega u_0^{p\alpha + 1} dx + \frac{1}{p\beta^2 + \beta}\int_\Gamma v_0^{p\beta + 1} dS.
	\end{align}
	Taking the $p$-root and let $p\to \infty$, we obtain
	\begin{equation*}
		\|\ue(t)\|_{L^{\infty}(\Omega)} + \|\ve(t)\|_{L^{\infty}(\Gamma)} \le C
	\end{equation*}
	for a constant $C$ indepedent of $\epsilon>0$.
\end{proof}
We also need some estimates of the gradient, which we utilize the entropic structure.
\begin{bd}\label{lm:gradient_bound}
	We have the estimate:
	\begin{equation*}
		\|\nabla \ue\|_{L^2(0,T,L^2(\Omega))} +  \|\nabla_\Gamma \ve\|_{L^2(0,T,L^2(\Gamma))} \leq C
	\end{equation*}
	with a constant $C$ does not depend on $\epsilon$.
\end{bd}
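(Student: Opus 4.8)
The plan is to re-run the logarithmic entropy argument of Lemma \ref{lm:gradient_approx}, now for the genuine solution $(\ue,\ve)$ of \eqref{sys} rather than the approximate pair, while carefully tracking that every constant produced is independent of $\epsilon$. Concretely, I would work with the entropy
\[
E[\ue,\ve](t) = \int_\Omega \ue(\log\ue - 1)\,dx + \int_\Gamma \ve(\log\ve - 1)\,dS,
\]
and compute its dissipation by testing the weak formulation \eqref{wf1} of $\ue$ with $\log\ue$ and of $\ve$ with $\log\ve$. Since $\log\ue$ and $\log\ve$ need not lie in the correct spaces without a positive lower bound, I would regularize exactly as before by replacing $\ue,\ve$ with $\ue+\lambda,\ve+\lambda$ for small $\lambda>0$, use the modified entropy $E_\lambda$, and send $\lambda\to 0^+$ at the end.

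The dissipation identity would read
\[
-\frac{d}{dt}E_\lambda(t) = d_u\int_\Omega \frac{|\nabla\ue|^2}{\ue+\lambda}\,dx + d_v\int_\Gamma \frac{|\nabla_\Gamma\ve|^2}{\ve+\lambda}\,dS + \frac{1}{\epsilon}\int_\Gamma (\ue^\alpha - \ve^\beta)\log\frac{(\ue+\lambda)^\alpha}{(\ve+\lambda)^\beta}\,dS.
\]
The key structural point is that the reaction term is nonnegative, because $\ue^\alpha-\ve^\beta$ and $\log\frac{\ue^\alpha}{\ve^\beta}$ always share the same sign; hence this term only helps the dissipation and can be discarded after integration in time. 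Integrating from $0$ to $T$ then yields
\[
d_u\int_0^T\!\!\int_\Omega \frac{|\nabla\ue|^2}{\ue+\lambda}\,dx\,dt + d_v\int_0^T\!\!\int_\Gamma \frac{|\nabla_\Gamma\ve|^2}{\ve+\lambda}\,dS\,dt \le E_\lambda(0) - E_\lambda(T).
\]
Here $E_\lambda(0)$ is bounded above uniformly (choosing $\lambda<1$ and using $(u_0,v_0)\in L^\infty(\Omega)\times L^\infty(\Gamma)$), while $E_\lambda(T)$ is bounded below thanks to the $\epsilon$-uniform $L^\infty$ bound of Lemma \ref{lm:ub}; crucially, both bounds are independent of $\epsilon$.

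Letting $\lambda\to 0^+$ (Monotone Convergence on the weighted gradient terms, and, exactly as in Lemma \ref{lm:gradient_approx}, a Fatou-type upper-bound argument to control the discarded reaction term) gives the $\epsilon$-uniform estimate $d_u\int_0^T\!\int_\Omega \frac{|\nabla\ue|^2}{\ue} + d_v\int_0^T\!\int_\Gamma \frac{|\nabla_\Gamma\ve|^2}{\ve}\le C$. Finally, I would convert these weighted bounds into the stated estimate by writing $|\nabla\ue|^2 = \frac{|\nabla\ue|^2}{\ue}\,\ue$ and pulling out the supremum norm,
\[
\|\nabla\ue\|_{L^2(0,T;L^2(\Omega))}^2 \le \|\ue\|_{L^\infty(0,T;L^\infty(\Omega))}\int_0^T\!\!\int_\Omega \frac{|\nabla\ue|^2}{\ue}\,dx\,dt \le M\,C,
\]
again using Lemma \ref{lm:ub}, and likewise for $\nabla_\Gamma\ve$. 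The only genuine obstacle is bookkeeping rather than analysis: one must verify that the $1/\epsilon$-weighted reaction term is truly nonnegative so that it may be dropped, and confirm that $E(0)-E(T)$ together with the $L^\infty$ bound are all uniform in $\epsilon$. Since the argument mirrors Lemma \ref{lm:gradient_approx} with the $F_\delta$-regularization removed and attention paid to $\epsilon$-uniformity, no new difficulty arises.
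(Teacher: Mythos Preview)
Your proposal is correct and follows essentially the same approach as the paper: both introduce the $\lambda$-regularized logarithmic entropy $E_\lambda$, compute its dissipation, bound $E_\lambda(0)-E_\lambda(T)$ uniformly in $\epsilon$ via Lemma~\ref{lm:ub}, pass $\lambda\to0^+$ with Monotone Convergence plus a Fatou argument on the reaction term, and finally convert the weighted Fisher-type bound into an $L^2$ gradient bound using the uniform $L^\infty$ control. The paper explicitly notes it is repeating the argument of Lemma~\ref{lm:gradient_approx} with attention to $\epsilon$-independence, which is exactly what you outline.
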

\begin{proof}
	To prove this lemma, we introduce a modified logarithm entropy function, which is similar to Lemma \ref{lm:gradient_approx}
	\begin{equation}\label{log_entropy2}
		E_\lambda[\ue,\ve](t) = \int_\Omega (\ue + \lambda)(\log (\ue+\lambda) - 1)dx \nonumber + \int_\Gamma (\ve + \lambda)(\log (\ve + \lambda) - 1)dS.
	\end{equation}
	Direct computations give us the identity
	\begin{align*}
		\int_0^T \left( d_u\int_\Omega \frac{|\nabla \ue|^2 }{\ue + \lambda}dx + d_v\int_\Gamma \frac{|\nabla_\Gamma \ve|^2}{\ve + \lambda}dS \right)dt  = E_\lambda(0) - E_\lambda(T) + \frac{1}{\epsilon}\int_0 ^T \int_\Gamma (\ve ^\beta - \ue^\alpha)\log\frac{(\ue + \lambda)^\alpha}{(\ve + \lambda)^\beta}dS.
	\end{align*}
	Using the fact that $E_\lambda(0)$ and $E_\lambda(T)$ are bounded, we obtain the estimation
	\begin{equation}\label{est2.3.1}
		d_u\int_0^T \int_\Omega \frac{|\nabla \ue|^2 }{\ue + \lambda}dxdt + d_v\int_0^T\int_\Gamma \frac{|\nabla_\Gamma \ve|^2}{\ve + \lambda}dSdt \nonumber  \leq C + \frac{1}{\epsilon}\int_0 ^T \int_\Gamma (\ve ^\beta - \ue^\alpha)\log\frac{(\ue + \lambda)^\alpha}{(\ve + \lambda)^\beta}dS.
	\end{equation}
	Now, we pass the limit $\lambda \to 0^+$ for the estimation \eqref{est2.3.1}, similar to Lemma \ref{lm:gradient_approx}.
	\begin{equation*}
		d_u\int_0^T\int_\Omega \frac{|\nabla \ue|^2 }{\ue}dxdt +
		d_v\int_0^T\int_\Gamma \frac{|\nabla_\Gamma \ve|^2}{\ve}dSdt\leq C,
	\end{equation*}
	where $C$ is a constant that does not depend on $\epsilon$.
	\indent Finally, from the Lemma \ref{lm:ub}, we can show that the gradient is also bounded uniformly 
	\begin{align*}
		\|\nabla u_\epsilon\|^2_{L^2(0,T;L^2(\Omega))} \leq  \int_0^T \|u_\epsilon(t)\|_{L^\infty(\Omega)}\int_\Omega \frac{|\nabla u_\epsilon|^2}{u_\epsilon } dxdt  \leq C
	\end{align*}
	with a constant $C$ is independent to $\epsilon$. By a similar argument, we obtain the conclusion for the gradient of $\ve$
	\[
	\|\nabla_\Gamma v_\epsilon\|^2_{L^2(0,T;L^2(\Gamma)} \leq C.
	\]
\end{proof}

In order to the apply the Aubin-Lions lemma, we need uniform bound of the time derivative. However, we see that testing each equation of $\ue$ and $\ve$ will not help due to the singular reaction term as $\epsilon\to 0$. In order to overcome this issue, we look at the time derivative of $(\ue,\ve)$ in a suitable product space, which allows us to eliminate the singular reaction term when testing the system with test functions.
\begin{bd}\label{lm:time_deri_bound}
	There exists a functional space $Z$, which is compactly embedded in $L^2(\Omega) \times L^2(\Gamma)$, such that we have $\{\partial_t (\ue,\ve)\}_{\epsilon>0}$ is bounded (uniformly with respect to $\epsilon$) in the space $L^2(0,T;Z^*)$.
\end{bd}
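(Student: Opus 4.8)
The plan is to take for $Z$ exactly the product (graph) space singled out in the Introduction, namely $Z = \{(\phi,\phi|_\Gamma) : \phi\in H^1(\Omega),\ \phi|_\Gamma\in H^1(\Gamma)\}$ endowed with $\|(\phi,\phi|_\Gamma)\|_Z = \|\phi\|_{H^1(\Omega)} + \|\phi|_\Gamma\|_{H^1(\Gamma)}$. First I would record the compact embedding $Z \hookrightarrow\hookrightarrow L^2(\Omega)\times L^2(\Gamma)$. Given a sequence bounded in $Z$, the components $\phi_n$ are bounded in $H^1(\Omega)$ and the traces $\phi_n|_\Gamma$ are bounded in $H^1(\Gamma)$; the Rellich--Kondrachov theorem on $\Omega$ and its analogue on the compact manifold $\Gamma$ then provide a subsequence with $\phi_n\to\phi$ in $L^2(\Omega)$ and $\phi_n|_\Gamma\to g$ in $L^2(\Gamma)$, while continuity of the trace operator forces $g=\phi|_\Gamma$. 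Hence bounded subsets of $Z$ are precompact in $L^2(\Omega)\times L^2(\Gamma)$, which is the compactness needed for the later Aubin--Lions step.

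The core idea is to test the two weak formulations in \eqref{wf1} \emph{simultaneously} against a single element of $Z$, weighted so that the singular reaction terms cancel. Concretely, for $(\phi,\phi|_\Gamma)\in Z$ I would take $\beta$ times the $u$-equation tested with $\phi$ plus $\alpha$ times the $v$-equation tested with the common trace $\phi|_\Gamma$ --- the latter is admissible precisely because membership in $Z$ guarantees $\phi|_\Gamma\in H^1(\Gamma)$, which is what the surface-diffusion term $\int_\Gamma\nabla_\Gamma\ve\cdot\nabla_\Gamma\phi|_\Gamma\,dS$ requires. The boundary contributions produced are $-\tfrac{\alpha\beta}{\epsilon}\int_\Gamma(\ue^\alpha-\ve^\beta)\phi|_\Gamma\,dS$ from the first equation and $+\tfrac{\alpha\beta}{\epsilon}\int_\Gamma(\ue^\alpha-\ve^\beta)\phi|_\Gamma\,dS$ from the second, so they annihilate exactly and leave
\[
\beta\langle\partial_t\ue,\phi\rangle_\Omega + \alpha\langle\partial_t\ve,\phi|_\Gamma\rangle_\Gamma = -\beta d_u\int_\Omega\nabla\ue\cdot\nabla\phi\,dx - \alpha d_v\int_\Gamma\nabla_\Gamma\ve\cdot\nabla_\Gamma\phi|_\Gamma\,dS.
\]
Since $\alpha,\beta>0$ are fixed constants, controlling this weighted functional is equivalent to controlling $\partial_t(\ue,\ve)$ in $Z^*$; I would accordingly identify $\partial_t(\ue,\ve)$ with the above functional (equivalently, bound $\partial_t(\beta\ue,\alpha\ve)$ under the standard duality of $Z^*$ and $Z$).

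The remaining estimate is then immediate. By Cauchy--Schwarz the right-hand side above is bounded by $\max\{\beta d_u,\alpha d_v\}\big(\|\nabla\ue\|_{L^2(\Omega)} + \|\nabla_\Gamma\ve\|_{L^2(\Gamma)}\big)\|(\phi,\phi|_\Gamma)\|_Z$, so that
\[
\|\partial_t(\ue,\ve)(t)\|_{Z^*} \le C\big(\|\nabla\ue(t)\|_{L^2(\Omega)} + \|\nabla_\Gamma\ve(t)\|_{L^2(\Gamma)}\big).
\]
Squaring, integrating over $(0,T)$, and invoking the uniform gradient bound from Lemma \ref{lm:gradient_bound} yields $\|\partial_t(\ue,\ve)\|_{L^2(0,T;Z^*)}\le C$ with $C$ independent of $\epsilon$, as claimed.

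The one genuine obstacle is the singular reaction term $\tfrac1\epsilon(\ue^\alpha-\ve^\beta)$: tested separately against either equation it is not controllable as $\epsilon\to 0$, and the entropy dissipation underlying Lemma \ref{lm:gradient_bound} only bounds the product $\tfrac1\epsilon(\ue^\alpha-\ve^\beta)\log(\ue^\alpha/\ve^\beta)$, not $\tfrac1\epsilon|\ue^\alpha-\ve^\beta|$ itself, so a direct estimate of the term is hopeless for $\alpha\neq\beta$. The whole point of moving to the product space $Z$ with the $\beta$/$\alpha$ weighting is to make this term \emph{disappear by cancellation} rather than to bound it, which is exactly why both equations must be tested against the \emph{same} trace $\phi|_\Gamma$.
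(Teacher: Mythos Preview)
Your proposal is correct and follows essentially the same approach as the paper: the same space $Z=\{(\phi,\phi|_\Gamma):\phi\in H^1(\Omega),\ \phi|_\Gamma\in H^1(\Gamma)\}$, the same $\beta/\alpha$ weighting of the two weak formulations to cancel the singular reaction term against a common trace $\phi|_\Gamma$, and the same conclusion via Cauchy--Schwarz and Lemma~\ref{lm:gradient_bound}. The only addition is that you explicitly verify the compact embedding $Z\hookrightarrow\hookrightarrow L^2(\Omega)\times L^2(\Gamma)$ via Rellich--Kondrachov, which the paper asserts but does not spell out.
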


\begin{proof}
	We can rewrite the weak formulation as
	\begin{equation}\label{wfu1}
		\langle \partial_t \ue, \varphi \rangle_{\Omega} +\int_\Omega  d_u \nabla \ue \cdot \nabla\varphi dx = - \frac{\alpha}{\epsilon} \int_\Gamma (\ue^\alpha - \ve^\beta)\varphi dS,
	\end{equation}
	\begin{equation}\label{wfv1}
		\langle \partial_t \ve, \psi \rangle_{\Gamma} +\int_\Gamma  d_v \nabla_\Gamma \ve \cdot \nabla_\Gamma \psi dS = \frac{\beta}{\epsilon}\int_\Gamma (\ue^\alpha - \ve^\beta)\psi dS.
	\end{equation}
	By adding the right hand sides of these equations and choose an appropriate test functions, we can remove the paramter $\epsilon$. Indeed, we define a new functional space for the test function (for more detail about this functional space, see, e.g. \cite{VAZQUEZ20112143}).
	\begin{equation}
		Z := \{(\phi,\phi|_{\Gamma}): \phi \in H^1(\Omega) \text{ and } \phi|_{\Gamma} \in H^1(\Gamma)\},
	\end{equation}
	and inherited the norm and inner product of $H^1(\Omega) \times H^1(\Gamma)$
	\begin{equation*}
		\|(\phi,\phi|_{\Gamma})\|_Z := \|\phi\|_{H^1(\Omega)} + \|\phi|_{\Gamma}\|_{H^1(\Gamma)},
	\end{equation*}
	\begin{equation*}
		\Bigl((\phi_1,\phi_1|_{\Gamma}), (\phi_2,\phi_2|_{\Gamma})\Bigr)_Z := (\phi_1,\phi_2)_{H^1(\Omega)} + (\phi_1|_{\Gamma}, \phi_2|_{\Gamma})_{H^1(\Gamma)}.
	\end{equation*}
	To show that the time derivative of $(\ue,\ve)$ is bounded in $L^2(0,T;Z^*)$, we take $(\varphi,\varphi |_{\Gamma}) \in Z$ arbitrary and multiply \eqref{wfu1} by $\beta \varphi$, \eqref{wfv1} by $\alpha \varphi$ and take the sum of these and take the integration from $0$ to $T$, we have:
	\begin{align}\label{eq2.4.1}
		\int_0^T\bigg(\int_\Omega \alpha \langle \partial_t \ue, \varphi \rangle_{\Omega}dx &+ \int_\Gamma \beta \langle \partial_t \ve, \varphi |_{\Gamma} \rangle_{\Gamma}dS\bigg)dt \nonumber \\ = -\int_0^T&\bigg(\int_\Omega \beta d_u \nabla \ue \cdot \nabla \varphi dx + \int_\Gamma \alpha d_v \nabla_\Gamma \ve \cdot \nabla_\Gamma \varphi |_{\Gamma} dS\bigg)dt.
	\end{align}
	Setting $V := L^2(0,T;Z)$, the dual of $V$ is $V^*$, which can be identified by $L^2(0,T;Z^*)$. So, the left hand side of equation \eqref{eq2.4.1} can be rewritten as the pairing between $V^*$ and $V$:
	\begin{equation}
		\label{eq2.4.2}
		\int_0^T\bigg(\int_\Omega \alpha \langle \partial_t \ue, \varphi \rangle_{\Omega}dx + \int_\Gamma \beta \langle \partial_t \ve, \varphi|_{\Gamma}
		\rangle_{\Gamma}dS\bigg)dt   = \langle \partial_t(\alpha\ue,\beta\ve); (\varphi,\varphi |_{\Gamma}) \rangle_{V^*, V}.
	\end{equation}
	Moreover, the right hand side of \eqref{eq2.4.1} can be bounded in the norm of $V$ due to the Lemma \ref{lm:gradient_bound}, which give the estimate:
	\begin{align*}
		&\int_0^T\int_\Omega \beta d_u \nabla \ue \cdot \nabla \varphi dx + \int_\Gamma \alpha d_v \nabla_\Gamma \ve \cdot \nabla_\Gamma (\varphi |_{\Gamma}) dSdt \\ &\leq \, \beta d_u \|\nabla \ue\|_{L^2(0,T;L^2(\Omega))}\|\nabla \varphi\|_{L^2(0,T;L^2(\Omega))} + \alpha d_v \|\nabla_\Gamma \ve\|_{L^2(0,T;L^2(\Gamma))}\|\nabla_\Gamma (\varphi |_{\Gamma})\|_{L^2(0,T;L^2(\Gamma))} \\  &\leq \,C(\|\varphi\|_{L^2(0,T;H^1(\Omega))} + \|\varphi |_{\Gamma}\|_{L^2(0,T;H^1(\Gamma))})\\  &\leq  C\, \|(\varphi, \varphi |_{\Gamma})\|_{V},
	\end{align*}
	where the first inequality given by Holder inequality. In the estimate, $C$ is a constant that does not depend on $\epsilon$. Combine with \eqref{eq2.4.1} and \eqref{eq2.4.2}, we have:
	\begin{equation}
		\langle \partial_t(\alpha\ue,\beta\ve); (\varphi,\varphi |_{\Gamma}) \rangle_{V^*, V} \leq C \|(\varphi, \varphi |_{\Gamma})\|_{V}
	\end{equation}
	for all $(\varphi,\varphi |_{\Gamma}) \in V$, or equivalently
	\begin{equation*}
		\|\partial_t(\ue,\ve)\|_{V^*} \leq C .
	\end{equation*}
	Hence, we have $Z$ is the required functional space.
\end{proof}
Before proving the main theorem, we present a definition for weak solutions to the limit system \eqref{limit_system}.
\begin{dn}\label{def_weak_sol_lim}
	We call $w$ a weak solution of problem \eqref{limit_system} if it satisfies the regularity
	\begin{equation*}
		w \in C([0,T]; L^2(\Omega)) \text{ and } w \in L^\infty(0,T;L^\infty(\Omega)) \cap L^2(0,T;H^1(\Omega)),
	\end{equation*}
	\begin{equation*}
		w|_{\Gamma} \in C([0,T]; L^2(\Gamma)) \text{ and } w|_{\Gamma}^{\alpha/\beta} \in L^\infty(0,T;L^\infty(\Gamma)) \cap L^2(0,T;H^1(\Gamma)),
	\end{equation*}
	and it satisfies the following weak formulation
	\begin{align}\label{wf3.1}
		&\beta\int_0^T\int_\Omega -w\varphi_t +\nabla w \cdot \nabla \varphi dx dt  + \alpha\int_0^T \int_\Gamma -(w|_{\Gamma})^{\alpha/\beta}(\varphi |_{\Gamma})_t +\nabla_\Gamma (w|_{\Gamma}^{\alpha/\beta}) \cdot \nabla_\Gamma (\varphi |_{\Gamma}) dSdt \\ &= \beta\int_\Omega u_0\varphi(0)dx + \alpha\int_\Gamma v_0\varphi |_{\Gamma}(0)dS \nonumber
	\end{align}
	where the test function $(\varphi,\varphi |_{\Gamma}) \in C^1([0,T],H^1(\Omega) \times H^1(\Gamma))$ and $\varphi(T) = 0$ (which also implies $\varphi |_{\Gamma}(T) = 0$). 
\end{dn}
\begin{proof}[Proof of Theorem \ref{thm:converge}]
	First, from Lemmas \ref{lm:gradient_bound} and \ref{lm:time_deri_bound}, we have the following strong convergence as $\epsilon \to 0$, up to a subsequence
	\[
	\ue \to w \text{ in } L^2(0,T;L^2(\Omega)) \quad \text{ and } \quad \ve \to z \text{ in } L^2(0,T;L^2(\Gamma)).
	\]
	Thanks to the $L^\infty$-boundedness, we have in fact
	\begin{equation}\label{Lp-convergence}
		\ue \to w \text{ in } L^p(0,T;L^p(\Omega)) \quad \text{ and } \quad \ve \to z \text{ in } L^p(0,T;L^p(\Gamma))
	\end{equation}
	for any $1<p<\infty$. We now show the identity $w^\alpha |_{\Gamma}(t) = z^\beta(t)$ by starting again with the entropy function in Lemma \ref{lm:ub}, choose $p = 1$ and take the integration from $0$ to $T$, the uniform boundedness of $(\ue,\ve)$ implies
	\begin{equation*}
		\frac{1}{\epsilon}\|\ue^\alpha - \ve^\beta\|^2_{L^2(\Gamma \times (0,T))} \leq C.
	\end{equation*}
	where $C$ is independent of $\epsilon$, which means $\ue^\alpha - \ve^\beta \to 0$ strongly in $L^2(\Gamma \times (0,T))$. Then, we have the weak convergence $\ue^\alpha - \ve^\beta \to 0$ in the same functional space, or in other words
	\begin{equation*}
		\lim_{\epsilon \to 0} \int_0^T\int_\Gamma (\ue^\alpha - \ve^\beta)\eta dS= 0
	\end{equation*}
	for all $\eta \in L^2(\Gamma \times (0,T))$. 
	On the other hand, thanks to Lemma \ref{lm:gradient_bound} and \eqref{Lp-convergence}, we have the weak convergence,
	\begin{equation*}
		\begin{cases}
			\ue^\alpha \rightharpoonup w^\alpha \in L^2(0,T;H^1(\Omega)), \\ 
			\ve^\beta \rightharpoonup z^\beta \in L^2(0,T;H^1(\Gamma)). \\ 
		\end{cases}
	\end{equation*}
	Due to trace Theorem, it implies
	\begin{equation*}
		\begin{cases}
			\uei^\alpha \rightharpoonup w^\alpha \in L^2(0,T;L^2(\Gamma)), \\ 
			\vei^\beta \rightharpoonup z^\beta \in L^2(0,T;L^2(\Gamma)). \\ 
		\end{cases}
	\end{equation*}
	So, for any $\eta \in L^2(0,T;L^2(\Gamma))$
	\begin{equation*}
		\begin{cases}
			\lim_{i \to \infty} (\uei^\alpha - w^\alpha,\eta)_{L^2(0,T;L^2(\Gamma))} = 0, \\ 
			\lim_{i \to \infty} (\vei^\beta - z^\beta,\eta)_{L^2(0,T;L^2(\Gamma))} = 0. \\ 
		\end{cases}
	\end{equation*}
	We can then deduce
	\begin{equation*}
		(w^\alpha - z^\beta, \eta)_{L^2(0,T;L^2(\Gamma))} = 0
	\end{equation*}
	for all $\eta \in L^2(0,T;L^2(\Gamma))$, which give the conclusion
	\begin{equation*}
		w^\alpha = z^\beta 
	\end{equation*}
	for a.e on $\Gamma \times (0,T)$. So, we can rewrite the limit $(w,z)$ by $(w,(w|_{\Gamma})^{\alpha/\beta})$. 		\indent Next, we will complete the proof of theorem by showing that $w$ satisfies the weak formulation \eqref{wf3.1}. First, we recall the weak formulation in the Introduction part, and choose the test function $(\varphi, \varphi|_{\Gamma}) \in C([0,T];Z)$
	\begin{equation}
		\int_0^T \int_\Omega (-\ue\varphi_t + d_u \nabla \ue \cdot \nabla \varphi)dxdt = \int_\Omega u_0\varphi(0)dx  - \frac{\alpha}{\epsilon} \int_0^T\int_\Gamma (\ue^\alpha - \ve^\beta)\varphi dSdt \label{wfu3}
	\end{equation}
	\begin{equation}\label{wfv3}
		\int_0^T \int_\Gamma (-\ve(\varphi|_{\Gamma})_t + d_v \nabla_\Gamma \ve \cdot \nabla_\Gamma (\varphi|_{\Gamma}))dSdt = \int_\Gamma v_0 \varphi|_{\Gamma}(0)dS + \frac{\beta}{\epsilon} \int_0^T\int_\Gamma (\ue^\alpha - \ve^\beta)\varphi|_{\Gamma} dSdt 
	\end{equation}
	Then, multiply \eqref{wfu3} by $\beta$, \eqref{wfv3} by $\alpha$, and take the sum, we obtain
	\begin{align}\label{eq:3.1.2}
		\int_0^T \int_\Omega -\ue\varphi_t + d_u \nabla \ue \cdot \nabla \varphi dxdt &+ \int_0^T \int_\Gamma -\ve(\varphi|_\Gamma)_t + d_v \nabla_\Gamma \ve \cdot \nabla_\Gamma (\varphi|_\Gamma)dSdt \nonumber\\  &= \int_\Omega u_0\varphi(0)dx + \int_\Gamma v_0\varphi|_\Gamma(0)dS.
	\end{align}
	Letting $\epsilon\to 0$ gives the weak formulation \eqref{wf3.1}, which concludes this proof.
\end{proof}
\section{Convergence rate}
In this section, we will investigate the convergence rate when $\alpha = \beta$, using the ideas from \cite{iida2006diffusion}. For the convergence rate, we assume more regularity of the initial condition 
\begin{equation*}
	u_0 \in H^1(\Omega) \, , v_0 \in H^1(\Gamma) \text{ and } v_0= u_0|_{\Gamma}.
\end{equation*}
This kind of condition is called compatibility condition for the heat equation with Wentzell boundary condition
\begin{equation}\label{lim_eq_linear}
	\begin{cases}
		\partial_t w - d_u\Delta w = 0, &x\in\Omega, t>0\\
		d_u \nabla w \cdot {\mathbf n} = -\partial_t w + d_v\Delta_{\Gamma}w, &x\in\Gamma,t>0\\
		w(x,0) = u_0(x) , &x\in\Omega,t>0\\
		w|_{\Gamma}(x,0) = v_0(x)  , &x\in\Gamma,t>0.\\
	\end{cases}
\end{equation}
Remark that the solution of \eqref{lim_eq_linear} $w$ satisfies the following weak formulation
\begin{align}\label{wfw3}
	\langle w_t, \varphi \rangle_\Omega + \langle (w|_{\Gamma})_t, \varphi|_{\Gamma} \rangle_\Gamma  + d_u \int_\Omega \nabla w \cdot \nabla \varphi dx + d_v \int_\Gamma \nabla_\Gamma (w|_{\Gamma})\cdot \nabla_\Gamma (\varphi |_{\Gamma}) dS = 0 
\end{align}
for all $\varphi \in H^1(\Omega)$ with $\varphi|_{\Gamma} \in H^1(\Gamma)$ and a.e. $t \in (0,T)$.
The existence of a unique global solution has been obtained in \cite{VAZQUEZ20112143}. The solution of this problem enjoys the further regularity in the following lemma, which is needed for our analysis.
\begin{bd}
	The solution of the system \eqref{lim_eq_linear} $w$ has the following property
	\begin{equation*}
		\partial_t(w,w_{|\Gamma}) \in L^2(0,T,L^2(\Omega) \times L^2(\Gamma)).
	\end{equation*}
\end{bd}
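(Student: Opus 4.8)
The plan is to obtain the claimed regularity from a single energy estimate: testing the weak formulation \eqref{wfw3} with the time derivative of the solution. Formally, choosing $\varphi = w_t$ (so that $\varphi|_\Gamma = (w|_\Gamma)_t$) in \eqref{wfw3} and using $\nabla w \cdot \nabla w_t = \tfrac12 \partial_t|\nabla w|^2$ together with the analogous identity on $\Gamma$ gives
\begin{equation*}
\|w_t\|_{L^2(\Omega)}^2 + \|(w|_\Gamma)_t\|_{L^2(\Gamma)}^2 + \frac{d_u}{2}\frac{d}{dt}\|\nabla w\|_{L^2(\Omega)}^2 + \frac{d_v}{2}\frac{d}{dt}\|\nabla_\Gamma (w|_\Gamma)\|_{L^2(\Gamma)}^2 = 0.
\end{equation*}
Integrating over $(0,T)$ and discarding the non-negative Dirichlet energies at time $T$ would leave
\begin{equation*}
\int_0^T \bigl( \|w_t\|_{L^2(\Omega)}^2 + \|(w|_\Gamma)_t\|_{L^2(\Gamma)}^2\bigr)\,dt \le \frac{d_u}{2}\|\nabla u_0\|_{L^2(\Omega)}^2 + \frac{d_v}{2}\|\nabla_\Gamma v_0\|_{L^2(\Gamma)}^2,
\end{equation*}
whose right-hand side is finite precisely because of the compatibility condition $u_0 \in H^1(\Omega)$, $v_0 \in H^1(\Gamma)$, $v_0 = u_0|_\Gamma$, i.e. because $(u_0, u_0|_\Gamma) \in Z$. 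This is exactly the assertion $\partial_t(w, w|_\Gamma) \in L^2(0,T; L^2(\Omega) \times L^2(\Gamma))$.

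The only non-rigorous point is that $w_t$ is not a priori an admissible test function, so I would make the computation rigorous through a Galerkin scheme adapted to the Wentzell structure. Set $H := L^2(\Omega) \times L^2(\Gamma)$ and let $a(\cdot,\cdot)$ be the symmetric non-negative bilinear form of \eqref{wfw3},
\begin{equation*}
a\bigl((\phi,\phi|_\Gamma),(\psi,\psi|_\Gamma)\bigr) := d_u\int_\Omega \nabla \phi \cdot \nabla \psi \, dx + d_v \int_\Gamma \nabla_\Gamma (\phi|_\Gamma) \cdot \nabla_\Gamma (\psi|_\Gamma)\, dS,
\end{equation*}
defined on $Z$. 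Since $Z$ is dense in $H$ and embeds compactly into $H$, the form $a$ admits a complete system of eigenfunctions $(e_i)_{i\ge 1} \subset Z$ that are orthonormal in $H$ and orthogonal in $a$, with $a(e_i,e_j) = \lambda_i \delta_{ij}$ and $\lambda_i \ge 0$. Taking $(e_i)$ as the Galerkin basis, the decisive feature is that the approximation $w_m(t) = \sum_{i=1}^m c_i(t) e_i$ has $\partial_t w_m$ in the finite-dimensional span, hence it is a legitimate test function.

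Testing the Galerkin equations with $\partial_t w_m$ and using the symmetry of $a$ then yields the exact identity $\|\partial_t w_m\|_H^2 + \tfrac12 \frac{d}{dt} a(w_m, w_m) = 0$; integrating in time and dropping the non-negative term $a(w_m(T),w_m(T))$ gives $\int_0^T \|\partial_t w_m\|_H^2 \, dt \le \tfrac12 a(w_m(0),w_m(0))$. Choosing $w_m(0)$ as the $H$-orthogonal projection of $(u_0, u_0|_\Gamma) \in Z$ onto $\mathrm{span}(e_1,\dots,e_m)$ and exploiting the $a$-orthogonality of the $e_i$ gives $a(w_m(0),w_m(0)) \le a\bigl((u_0,u_0|_\Gamma),(u_0,u_0|_\Gamma)\bigr) = d_u\|\nabla u_0\|_{L^2(\Omega)}^2 + d_v\|\nabla_\Gamma v_0\|_{L^2(\Gamma)}^2$, a bound independent of $m$. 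Thus $\{\partial_t w_m\}$ is bounded in $L^2(0,T;H)$; passing to a weakly convergent subsequence, identifying the limit with $\partial_t(w, w|_\Gamma)$ via the uniqueness of the weak solution from \cite{VAZQUEZ20112143}, and invoking weak lower semicontinuity of the $L^2(0,T;H)$ norm gives the claim. I expect the main obstacle to lie entirely in this justification step, namely constructing the spectral basis simultaneously diagonalizing $a$ and the $H$-inner product and verifying that the $H$-projection of the initial datum does not raise its $a$-energy; once the Galerkin framework is in place the energy identity is immediate.
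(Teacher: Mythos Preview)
Your proposal is correct and follows essentially the same route as the paper: both arguments make the formal computation ``test \eqref{wfw3} with $\partial_t w$'' rigorous via a Galerkin approximation in $Z$, use the symmetry of the bilinear form $a$ to write $a(W_m,\partial_t W_m)=\tfrac12\frac{d}{dt}a(W_m,W_m)$, integrate in time, and pass to the limit. Your version is in fact more careful than the paper's, since you specify a spectral basis simultaneously orthogonal for $a$ and for $H$, justify the bound $a(w_m(0),w_m(0))\le a((u_0,u_0|_\Gamma),(u_0,u_0|_\Gamma))$ via that orthogonality, and explain how the limit is identified through uniqueness from \cite{VAZQUEZ20112143}---points the paper leaves implicit.
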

\begin{proof}
	We recall the variational formulation of \eqref{lim_eq_linear}, which is the case $\alpha = \beta$ of the formula \eqref{wf3.1}
	\begin{align*}\label{eq:3.1.3}
		\int_0^T \int_\Omega -w\varphi_t + d_u \nabla w \cdot \nabla \varphi dxdt &+ \int_0^T \int_\Gamma -w_{|\Gamma}(\varphi|_\Gamma)_t + d_v \nabla_\Gamma w_{|\Gamma} \cdot \nabla_\Gamma (\varphi|_\Gamma)dSdt \nonumber\\  &= \int_\Omega u_0\varphi(0)dx + \int_\Gamma v_0\varphi|_\Gamma(0)dS,
	\end{align*}
	which can be rewritten to
	\begin{equation}\label{wf3.3}
		\frac{d}{dt}\langle (w,w_{|\Gamma}),(\varphi, \varphi_{|\Gamma}) \rangle + a(w, w_{|\Gamma}, \varphi, \varphi_{|\Gamma}) = 0.
	\end{equation}
	In the second equation, we define the bilinear form
	\begin{equation*}
		a(w, w_{|\Gamma}, \varphi, \varphi_{|\Gamma}) := \int_\Omega  d_u \nabla w \cdot \nabla \varphi dx + \int_\Gamma d_v \nabla_\Gamma w_{|\Gamma} \cdot \nabla_\Gamma (\varphi|_\Gamma)dS
	\end{equation*}
	and the dual is in $L^2$ (which actually is restricted on $Z^* \times Z$). We already have the unique existence of $(w,w_{|\Gamma})$ in the functional space
	\begin{equation*}
		(w,w_{|\Gamma}) \in L^2(0,T,Z)\cap C([0,T]; L^2(\Omega) \times L^2(\Gamma)) \quad \partial_t (w,w_{|\Gamma}) \in L^2(0,T,Z^*)
	\end{equation*}
	To prove the regularity for $\partial_t (w,w_
	{|\Gamma})$, we use Garlekin approximation (see \cite{evans}[Theorem 7.1.5]). Indeed, we denote by $W_m = (w_m, (w_m)_{|\Gamma})$ the projection of $(w,w_{|\Gamma})$ from $Z$ on $Z_m$, the $m$-dimesional subspace. 
		
	Recall that the Garlekin's approximation use the smooth function, therefore we have $W_m \in C^1([0,T]; Z_m)$, or $\partial_t w_m(t) \in Z_m \subset Z$ for $t \in [0,T]$. Therefore, by choosing $W_m$ as the test function, we get
	\begin{equation*}
		\|\partial_t W_m\|_{L^2(\Omega) \times L^2(\Gamma)}^2 + a(W_m, \partial_t W_m) = 0.
	\end{equation*}
	Using the formulation that $\frac{d}{dt} a(W_m, W_m) = 2a(W_m, \partial_t W_m)$, we get
	\begin{equation*}
		2\|\partial_t W_m\|_{L^2(\Omega) \times L^2(\Gamma)}^2  + \frac{d}{dt} a(W_m, W_m) = 0
	\end{equation*}
	Then, integration in term of time, we have the uniform estimation in term of $m$
	\begin{equation*}
		\|\partial_t W_m\|_{L^2(0,T,L^2(\Omega) \times L^2(\Gamma))} \leq \frac 12 a(W_m(0), W_m(0)) \le C
	\end{equation*}
	where the last inequality is due to the regularity initial data. Then, by passing the limit $m \to \infty$, we have the conclusion
	\begin{equation*}
		\|\partial_t (w,w_{|\Gamma})\|_{L^2(0,T;L^2(\Omega) \times L^2(\Gamma))} \leq C.
	\end{equation*}
\end{proof}

	

In order to prove the convergence rate, we will need the following lemma showing positive lower bounds of solutions.
\begin{bd}\label{lm:lower_bound}
	If the inital data is stricly positive, $u_{0}(x) \ge m_0 > 0$ a.e. in $\Omega$ and $v_0(x) \ge m_0 > 0$ a.e. in $\Gamma$, then there exists $m > 0$ such that
	\begin{equation*}
		\ue(x,t) \geq m \; \text{ a.e. in } \Omega\times [0,\infty) \quad \text{ and } \quad  \ve(x,t) \ge m \; \text{ a.e. in } \Gamma \times [0,\infty).
	\end{equation*}
	Moreover, the trace of $\ue$ is also stricly positive
	\begin{equation*}
		\ue(x,t) \ge m \; \text{ a.e. in } \Gamma \times [0,\infty).
	\end{equation*}
\end{bd}
\begin{proof}
	We prove this lemma by showing that $1/\ue$ and $1/\ve$ are bounded from above. To do that, we introduce the following entropy function
	\begin{equation*}
		E_p[\ue,\ve](t) := \frac{1}{\alpha^2p -\alpha} \int_\Omega \frac{1}{\ue^{1-\alpha p}}dx + \frac{1}{\alpha^2p -\alpha} \int_\Omega \frac{1}{\ve^{1-\alpha p}}dS,
	\end{equation*}
	where parameter $p \in \mathbb{N} \backslash \{0\}$. Similarly, the time-derivative of the entropy funcion can be calculated by
	\begin{equation*}
		\frac{d}{dt}E_p = \frac{d_u}{\alpha}\int_\Omega \nabla \ue \cdot \nabla \frac{1}{\ue^{p\alpha}}dx + \frac{d_v}{\beta}\int_\Gamma \nabla \ve \cdot \nabla \frac{1}{\ve^{p\beta}}dS + \frac{1}{\epsilon}\int_\Gamma (\ue^\alpha - \ve^\beta)(\frac{1}{\ue^{\alpha p}} - \frac{1}{\ve^{\beta p}})dS.
	\end{equation*}
	We can check that these terms are non-positive, which give that
	\begin{equation*}
		E_p(0) \geq  E_p(t),
	\end{equation*}
	for all positive integer $p$ and a.e. time $t \in [0,T]$. Then, we have the estimation
	\begin{equation*}
		\frac{1}{\alpha^2p -\alpha} \int_\Omega \frac{1}{\ue(t)^{1-\alpha p}}dx + \frac{1}{\alpha^2p -\alpha} \int_\Omega \frac{1}{\ve(t)^{1-\alpha p}}dS \leq \frac{1}{\alpha^2p -\alpha} \int_\Omega \frac{1}{u_0^{1-\alpha p}}dx + \frac{1}{\alpha^2p -\alpha} \int_\Omega \frac{1}{v_0^{1-\alpha p}}dS.
	\end{equation*}
	Using similar argument to the end of proof in Lemma \ref{lm:ub_approx}, we can show that
	\begin{equation*}
		\Big\|\frac{1}{\ue}\Big\|_{L^{\infty}(0,T;L^\infty(\Omega))} + \Big\|\frac{1}{\ve}\Big\|_{L^{\infty}(0,T;L^\infty(\Gamma))} \leq C,
	\end{equation*}
	where $C$ is a constant that does not depend on $\epsilon$. Similar to Lemma \ref{lm: trace_esti}, we also have
	\begin{equation*}
		\Big\|\frac{1}{\ue}\Big\|_{L^{\infty}(0,T;L^\infty(\Gamma))} \leq C,
	\end{equation*}
	Fianlly, by choose $m = 1/C$, we get the conclusion.
\end{proof}
We are now ready to prove the convergence rate.
\begin{dl}[Convergence rate]
	Assume more that initial condition $(u_0,v_0) \in Z$ is strictly positive, the solution of \eqref{sys_linear} $\{(\ue,\ve)\}$ converges to the solution of $\eqref{lim_eq_linear}$ as $\epsilon \to 0$ with convergence rate
	\begin{equation*}
		\|u(t)-w(t)\|_{L^2(\Omega)} + \|v(t)- w|_{\Gamma}(t)\|_{L^2(\Gamma)} \leq c \sqrt{\epsilon}.
	\end{equation*}
\end{dl}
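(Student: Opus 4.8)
The plan is to follow the relative $L^2$ energy method already initiated in the computation above: compare the two solutions directly by tracking $U := \ue - w$ on $\Omega$ and $V := \ve - w|_\Gamma$ on $\Gamma$, exploiting the crucial boundary identity $U - V = \ue - \ve$ on $\Gamma \times (0,T)$. The starting point is to differentiate $\|U(t)\|^2_{L^2(\Omega)} + \|V(t)\|^2_{L^2(\Gamma)}$ in time and insert the admissible test functions $\ue - w$ (respectively $\ve - w$) into the weak formulations \eqref{wfu3.2}, \eqref{wfv3.2} for $(\ue,\ve)$ and into the weak formulation \eqref{wfw3} for $w$. Here the regularity $\partial_t(w, w|_\Gamma) \in L^2(0,T;L^2(\Omega)\times L^2(\Gamma))$ from the preceding lemma is essential: it both legitimizes writing $\tfrac{d}{dt}\|U\|^2_{L^2(\Omega)} = 2\langle \partial_t \ue, U\rangle_\Omega - 2\langle \partial_t w, U\rangle_\Omega$ and lets me later treat the leftover $\langle \partial_t w, \ue - \ve\rangle_\Gamma$ as a genuine $L^2$ inner product $(\partial_t w, U - V)_{L^2(\Gamma)}$.

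After combining the three weak formulations, the diffusion contributions assemble into $-d_u\|\nabla(\ue - w)\|^2_{L^2(\Omega)} - d_v\|\nabla_\Gamma(\ve - w)\|^2_{L^2(\Gamma)} \le 0$, which I simply discard. What remains is the reaction term $-\tfrac{\alpha}{\epsilon}\int_\Gamma (\ue^\alpha - \ve^\alpha)(\ue - \ve)\,dS$ together with the residual $(\partial_t w, U - V)_{L^2(\Gamma)}$. For the reaction term I factor $\ue^\alpha - \ve^\alpha = \bigl(\sum_{i=0}^{\alpha-1}\ue^{\,i} \ve^{\,\alpha-1-i}\bigr)(\ue - \ve)$ and use a uniform-in-$\epsilon$ strictly positive lower bound $m>0$ for $\ue,\ve$ to bound the monomial sum below by $m^{\alpha-1}$, which gives a coercive gain $-\tfrac{c_1}{\epsilon}\|U - V\|^2_{L^2(\Gamma)}$ with $c_1$ independent of $\epsilon$.

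Putting these together and integrating on $(0,t_0)$, using the compatibility condition $v_0 = u_0|_\Gamma$ so that $U(0) = V(0) = 0$, I obtain
\[
\|U(t_0)\|^2_{L^2(\Omega)} + \|V(t_0)\|^2_{L^2(\Gamma)} \le -\frac{c_1}{\epsilon}\|U - V\|^2_{L^2(0,t_0;L^2(\Gamma))} + c_2\,\|U - V\|_{L^2(0,t_0;L^2(\Gamma))},
\]
where $c_2 = \|\partial_t w\|_{L^2(0,T;L^2(\Gamma))}$ is finite by the regularity lemma. A single application of Young's inequality absorbs the linear term into half of the (negative) quadratic one, leaving $\|U(t_0)\|^2_{L^2(\Omega)} + \|V(t_0)\|^2_{L^2(\Gamma)} \le c_3\,\epsilon$ for every $t_0 \in [0,T]$; taking square roots yields the asserted rate $c\sqrt{\epsilon}$.

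I expect the main obstacle to be the uniform-in-$\epsilon$ positive lower bound $m$ for $\ue$ and $\ve$, which is precisely what turns the monomial sum into a coercivity constant $c_1$ independent of $\epsilon$. Establishing it requires testing the system against negative powers such as $\ue^{-p\alpha}$ and $\ve^{-p\beta}$ and sending $p \to \infty$ (an $L^\infty$-type bound for $1/\ue$ and $1/\ve$); one must verify that the boundary reaction contribution carries the favorable sign and, critically, that the resulting bound does not degenerate as $\epsilon \to 0$. A secondary technical point is justifying the chain rule $\tfrac{d}{dt}\|U\|^2 = 2\langle \partial_t \ue - \partial_t w, U\rangle_\Omega$, which relies exactly on the extra regularity $\partial_t(w,w|_\Gamma) \in L^2(0,T;L^2)$ and on $\ue - w$ being an admissible test element of $Z$.
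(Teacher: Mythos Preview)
Your proposal is correct and follows essentially the same route as the paper: the relative $L^2$ energy for $(U,V)=(\ue-w,\ve-w|_\Gamma)$, the cancellation of the singular reaction against the test functions, the factorization of $\ue^\alpha-\ve^\alpha$ combined with a uniform-in-$\epsilon$ positive lower bound (obtained by testing with $\ue^{-p\alpha}$, $\ve^{-p\beta}$ and letting $p\to\infty$), the use of $\partial_t(w,w|_\Gamma)\in L^2(0,T;L^2(\Omega)\times L^2(\Gamma))$ for the residual term, and the final Young inequality are all exactly the paper's steps. You have also correctly flagged the two delicate points (the $\epsilon$-independent lower bound and the chain rule via the regularity lemma).
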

\begin{proof}
	When $\alpha = \beta$, the original system has the form
	\begin{equation}\label{sys_linear}
		\begin{cases}
			\partial_t\ue - d_u \Delta \ue = 0, &x\in\Omega, t>0,\\
			d_u \nabla \ue \cdot {\mathbf n} = -\frac{\alpha}{\epsilon}(\ue^\alpha -\ve^\alpha), &x\in\Gamma, t > 0,\\
			\partial_t \ve - d_v \Delta_{\Gamma}\ve = \frac{\alpha}{\epsilon}(\ue^\alpha - \ve^\alpha), &x\in\Gamma, t>0,\\
			\ue(x,0) = u_0(x) \geq 0, &x\in\Omega,\\
			\ve(x,0)= v_0(x) = (u_0)_{|\Gamma}(x) \geq 0, &x\in\Gamma ,
		\end{cases}
	\end{equation}
	and we use the following weak formulation
	\begin{equation}\label{wfu3.2}
		\langle \partial_t \ue, \varphi \rangle_{\Omega} + \int_\Omega d_u \nabla \ue \cdot \nabla\varphi dx = - \frac{\alpha}{\epsilon} \int_\Gamma (\ue^\alpha - \ve^\alpha)\varphi dS,
	\end{equation}
	\begin{equation}\label{wfv3.2}
		\langle \partial_t \ve, \psi \rangle_{\Gamma} + \int_\Gamma d_v \nabla_\Gamma \ve \cdot \nabla_\Gamma \psi dS = \frac{\alpha}{\epsilon}\int_\Gamma (\ue^\alpha - \ve^\alpha)\psi dS,
	\end{equation}
	for a.e. $t \in [0,T]$ and for all $(\varphi,\psi) \in H^1(\Omega) \times H^1(\Gamma)$.
	
	\medskip
	We are interested in the convergence rate for the system \eqref{sys_linear} converges to equation \eqref{lim_eq_linear} as $\epsilon \to 0$. First, fix a small $\epsilon$, we set
	\begin{align*}
		&U(x,t) := \ue(x,t) - w(x,t) \quad \text{on }\Omega \times [0,T],
		\\ & V(x,t) := \ve(x,t) - w(x,t) \quad \text{on }\Gamma \times [0,T] .
	\end{align*}
	Remark that we have $U-V = \ue-\ve$ a.e. on $\Gamma \times (0,T)$.
	Direct computations give
	\begin{align*}
		\frac{d}{dt}(\|U(t)\|^2_{L^2(\Omega)} + \|V(t)\|^2_{L^2(\Gamma)}) =& \langle \partial_t \ue,\ue-w \rangle_{\Omega} - \langle \partial_t w,\ue-w \rangle_{\Omega} + \langle \partial_t \ve,\ve-w \rangle_{\Gamma} - \langle \partial_t w,\ve-w \rangle_{\Gamma} \\ =& \langle \partial_t \ue,\ue-w \rangle_{\Omega} + \langle \partial_t \ve,\ve-w \rangle_{\Gamma} \\ &- \left(\langle \partial_t w,\ue-w \rangle_{\Omega} + \langle \partial_t w,\ue-w \rangle_{\Gamma}\right) +  \langle \partial_t w,\ue-\ve \rangle_{\Gamma}.
	\end{align*}
	Then, choosing the appropriate test function for weak formulation \eqref{wfw3}, \eqref{wfu3.2}  and \eqref{wfv3.2}, we obtain
	\begin{align*}
		\langle \partial_t \ue,\ue-w \rangle_{\Omega} = -d_u\int_\Omega \nabla \ue \cdot \nabla (\ue-w)dx- \frac{\alpha}{\epsilon} \int_\Gamma (\ue^\alpha - \ve^\alpha)(\ue-w) dS,
	\end{align*}
	\begin{align*}
		\langle \partial_t \ve,\ve-w \rangle_{\Gamma} = -d_v\int_\Gamma \nabla_\Gamma \ve \cdot \nabla_\Gamma (\ve-w)dS + \frac{\alpha}{\epsilon} \int_\Gamma (\ue^\alpha - \ve^\alpha)(\ve-w) dS,
	\end{align*}
	and
	\begin{align*}
		\langle \partial_t w,\ue-w \rangle_{\Omega} + \langle \partial_t w,\ue-w \rangle_{\Gamma} =  -d_u \int_\Omega \nabla w \cdot \nabla (\ue-w) dx  - d_v \int_\Gamma \nabla_\Gamma (w|_{\Gamma})\cdot \nabla_\Gamma (\ue-w) dS.
	\end{align*}
	Combine these, we have
	\begin{align*}
		\frac{d}{dt}(\|U(t)\|^2_{L^2(\Omega)} + \|V(t)\|^2_{L^2(\Gamma)}) 
		=& - d_u\|\nabla(\ue-w)\|_{L^2(\Omega)}^2  - d_v\|\nabla(\ve-w)\|_{L^2(\Gamma)}^2  \\ &- \frac{\alpha}{\epsilon} \int_\Gamma (\ue^\alpha - \ve^\alpha)(\ue-\ve)dS + \langle \partial_t w,\ue-\ve \rangle_{\Gamma}.
	\end{align*}
	The first and second terms of the right-hand side are non-positive, so we only need to work with the remaining term. For the integral term, by using mean value theorem we have
	\begin{align*}
		- \frac{\alpha}{\epsilon} \int_\Gamma (\ue^\alpha - \ve^\alpha)(\ue-\ve)dS =& - \frac{\alpha}{\epsilon} \int_\Gamma [(\xi(x)^{\alpha -1})(\ue- \ve)](\ue-\ve)dS \\
		=& - \frac{\alpha}{\epsilon} \int_\Gamma \xi(x)^{\alpha -1}|U-V|^2 dS,
	\end{align*}
	where $\xi(x) \in (\min\{u(x),v(x)\}, \max\{u(x),v(x)\})$ for a.e. $(x,t) \in \Gamma\times(0,T)$. Thanks to Lemma \ref{lm:lower_bound}, we arrive at
	\begin{equation*}
		- \frac{\alpha}{\epsilon} \int_\Gamma (\ue^\alpha - \ve^\alpha)(\ue-\ve)dS \leq \frac{-c_1}{\epsilon} \|U-V\|^2_{L^2(\Gamma)},
	\end{equation*}
	with $c_1$ is a constant that does not depend on $\epsilon$. On the other hand, the regularity of $w$ allows us to rewrite
	\begin{equation*}
		\langle \partial_t w,\ue-\ve \rangle_{\Gamma} = \left(\partial_t w, U-V \right)_{L^2(\Gamma)}.
	\end{equation*}
	From the regularity of $w$, we have $\|\partial_t w\|_{L^2(0,T;L^2(\Gamma))}\le c_2$ with $c_2$ being a constant independent of $\epsilon$. Combining the above, we deduce
	\begin{align*}
		\frac{d}{dt}(\|U(t)\|^2_{L^2(\Omega)} + \|V(t)\|^2_{L^2(\Gamma)}) &\leq \frac{-c_1}{\epsilon} \|U-V\|^2_{L^2(\Gamma)} + \|\partial_t w\|_{L^2(\Gamma)}\|U-V\|_{L^2(\Gamma)}.
	\end{align*}
	Fix  $t_0 \in [0,T]$, take the integration from $0$ to $t_0$ and using the fact that $U(0) = V(0) = 0$, we obtain the estimation
	\begin{align*}
		\|U(t_0)\|^2_{L^2(\Omega)} + \|V(t_0)\|^2_{L^2(\Gamma)} &\leq \int_0^{t_0} \frac{-c_1}{\epsilon} \|U-V\|^2_{L^2(\Gamma)}dt + c_2\|U - V\|_{L^2(0,t_0,L^2(\Gamma))} \\
		&\leq \frac{-c_1}{\epsilon} \|U-V\|^2_{L^2(0,t_0,L^2(\Gamma))} + c_2 \|U-V\|_{L^2(0,t_0,L^2(\Gamma))} \\
		&\leq \frac{-c_1}{2\epsilon} \|U-V\|^2_{L^2(0,t_0,L^2(\Gamma))} + c_3\epsilon \leq c_3 \epsilon
	\end{align*}
	which concludes the proof.
\end{proof}

\end{document}